\newtheorem{thm}{Theorem}[section]
\newtheorem{cor}[thm]{Corollary}
\newtheorem{lem}[thm]{Lemma}
\newtheorem{prop}[thm]{Proposition}
\theoremstyle{definition}
\newtheorem{defn}[thm]{Definition}
\theoremstyle{remark}
\newtheorem{rem}[thm]{Remark}
\numberwithin{equation}{section}
\newcommand{\Z}{\mathbb Z}
\newcommand{\C}{\mathbb C}
\newcommand{\R}{\mathbb R}
\newcommand{\N}{\mathbb N}
\newcommand{\Pro}{\mathbb P}
\newcommand{\gr}{\mathrm{gr}}
\font \rus= wncyr10
\newcommand{\sha}{\, \hbox{\rus x} \,}
\newcommand{\zetam}{\zeta^{ \mathfrak{m}}}
\newcommand{\Q}{\mathbb Q}
\newcommand{\Li}{\mathrm{Li}}
\newcommand{\To}{\longrightarrow}
\newcommand{\x}{\mathsf{x}}
\newcommand{\Sooo}{\mathcal{S}^0_{0,0}\,}
\newcommand{\Solo}{\mathcal{S}^0_{1,0}\,}
\newcommand{\Sool}{\mathcal{S}^0_{0,1}\,}
\newcommand{\Soll}{\mathcal{S}^0_{1,1}\,}
\newcommand{\Sllo}{\mathcal{S}^1_{1,0}\,}
\newcommand{\Slol}{\mathcal{S}^1_{0,1}\,}
\newcommand{\Sloo}{\mathcal{S}^1_{0,0}\,}
\newcommand{\Slll}{\mathcal{S}^1_{1,1}\,}
\newcommand{\Stot}{\mathcal{S}}
\newcommand{\Zooo}{\mathcal{Z}^0_{0,0}\,}
\newcommand{\Zolo}{\mathcal{Z}^0_{1,0}\,}
\newcommand{\Zool}{\mathcal{Z}^0_{0,1}\,}
\newcommand{\Zloo}{\mathcal{Z}^1_{0,0}\,}
\newcommand{\Zllo}{\mathcal{Z}^1_{1,0}\,}
\newcommand{\Zlol}{\mathcal{Z}^1_{0,1}\,}
\newcommand{\Zoll}{\mathcal{Z}^0_{1,1}\,}
\newcommand{\vooo}{V^0_{0,0}\,}
\newcommand{\volo}{V^0_{1,0}\,}
\newcommand{\vool}{V^0_{0,1}\,}
\newcommand{\vloo}{V^1_{0,0}\,}
\newcommand{\vllo}{V^1_{1,0}\,}
\newcommand{\vlol}{V^1_{0,1}\,}
\newcommand{\voll}{V^0_{1,1}\,}
\newcommand{\vlll}{V^1_{1,1}\,}
\newcommand{\ZZ}{\mathcal{Z}}
\newcommand{\Zo}{\mathcal{Z}_0}
\newcommand{\Zp}{\mathcal{Z}_{\pi}}
\newcommand{\ZH}{\mathcal{Z}_{H}}
\newcommand{\Zs}{\mathcal{Z}_{s}}
\newcommand{\Mo}{\mathcal{M}}
\newcommand{\Fo}{F}
\newcommand{\B}{\mathcal{B}}
\newcommand{\lSooo}{\hat{\mathcal{S}}^0_{0,0}\,}
\newcommand{\lSolo}{\hat{\mathcal{S}}^0_{1,0}\,}
\newcommand{\lSool}{\hat{\mathcal{S}}^0_{0,1}\,}
\newcommand{\lSoll}{\hat{\mathcal{S}}^0_{1,1}\,}
\newcommand{\lSllo}{\hat{\mathcal{S}}^1_{1,0}\,}
\newcommand{\lSlol}{\hat{\mathcal{S}}^1_{0,1}\,}
\newcommand{\lSloo}{\hat{\mathcal{S}}^1_{0,0}\,}
\newcommand{\lSlll}{\hat{\mathcal{S}}^1_{1,1}\,}
\newcommand{\lStot}{\hat{\mathcal{S}}}
\newcommand{\lZloo}{\hat{\mathcal{Z}}^1_{0,0}\,}
\newcommand{\lZllo}{\hat{\mathcal{Z}}^1_{1,0}\,}
\newcommand{\lvooo}{\hat{V}^0_{0,0}\,}
\newcommand{\lvolo}{\hat{V}^0_{1,0}\,}
\newcommand{\lvool}{\hat{V}^0_{0,1}\,}
\newcommand{\lvloo}{\hat{V}^1_{0,0}\,}
\newcommand{\lvllo}{\hat{V}^1_{1,0}\,}
\newcommand{\lvlol}{\hat{V}^1_{0,1}\,}
\newcommand{\lvoll}{\hat{V}^0_{1,1}\,}
\newcommand{\lvlll}{\hat{V}^1_{1,1}\,}
\begin{document}
\author{Francis Brown and Oliver Schnetz}
\begin{title}[The zig-zag conjecture]{Proof of the zig-zag conjecture}\end{title}
\begin{abstract}   A long-standing conjecture in quantum field theory due to Broadhurst and Kreimer  states that the amplitudes of the zig-zag graphs are a certain explicit  rational multiple of 
the odd values of the Riemann zeta function. In this paper we prove this conjecture by constructing a  certain family of single-valued  multiple polylogarithms.
The zig-zag graphs  therefore provide  the only  infinite family of primitive graphs in $\phi^4_4$ theory  (in fact, in any renormalisable quantum field theory in four dimensions) whose amplitudes are now known.
\end{abstract}
\maketitle

\hfill{To David Broadhurst, a pioneer, for his 65th birthday}

\section{Introduction}
In 1995 Broadhurst and Kreimer \cite{BK} conjectured a formula for the Feynman amplitudes of a well-known family of  graphs
called the zig-zag graphs. We give a proof of this conjecture using the second author's theory of graphical functions \cite{Graphical} (see also \cite{Drummond})
and  a variant of the first author's theory of single-valued multiple  polylogarithms \cite{BrSVP}. 
The proof makes use of   a recent  theorem due to Zagier  \cite{Zagier, Li} on the evaluation of the multiple zeta values $\zeta(2,\ldots, 2, 3, 2, \ldots, 2)$
in terms of the numbers $\zeta(2m+1) \pi^{2k}$.

\subsection{Statement of the theorem} For $n\geq 3$,  let $Z_n$ denote the zig-zag graph with $n$ loops (and zero external momenta), pictured below.

\begin{center}
\fcolorbox{white}{white}{
  \begin{picture}(194,113) (111,-32)
    \SetWidth{1.0}
    \SetColor{Black}
    \Vertex(128,33){3}
    \Vertex(192,33){3}
    \Vertex(256,33){3}
    \Vertex(160,-15){3}
    \Vertex(224,-15){3}
    \Vertex(288,-15){3}
    \Line[arrowpos=0.5,arrowlength=5,arrowwidth=2,arrowinset=0.2](128,33)(256,33)
    \Line[arrowpos=0.5,arrowlength=5,arrowwidth=2,arrowinset=0.2](128,33)(160,-15)
    \Line[arrowpos=0.5,arrowlength=5,arrowwidth=2,arrowinset=0.2](160,-15)(288,-15)
    \Line[arrowpos=0.5,arrowlength=5,arrowwidth=2,arrowinset=0.2](256,33)(288,-15)
    \Line[arrowpos=0.5,arrowlength=5,arrowwidth=2,arrowinset=0.2](160,-15)(192,33)
    \Line[arrowpos=0.5,arrowlength=5,arrowwidth=2,arrowinset=0.2](192,33)(224,-15)
    \Line[arrowpos=0.5,arrowlength=5,arrowwidth=2,arrowinset=0.2](224,-15)(256,33)
    \Arc[clock](200.855,-14.817)(87.145,146.722,-0.121)
    \Line(123,38)(128,33)
    \Line(160,-15)(155,-20)
    \Line[arrowpos=0.5,arrowlength=5,arrowwidth=2,arrowinset=0.2](288,-15)(293,-20)
    \Line[arrowpos=0.5,arrowlength=5,arrowwidth=2,arrowinset=0.2](256,33)(261,38)
   \Text(80,0)[lb]{{\Black{\large{$Z_5$}}}}
  \end{picture}
}
\end{center}

Its amplitude (scheme independent contribution to the beta function in $\phi_4^4$ theory), which is  a period in the sense of \cite{KZ}, 
can be written  in parametric space   as follows. Number the edges of $Z_n$ from $1$ to $2n$, and  to each edge  $e$  associate a variable $\alpha_e$.  The amplitude (or `period') of $Z_n$ is given by
the   convergent  integral  in projective space \cite{Wein}:
\begin{equation} \label{IZdef}
I_{Z_n} = \int_{\Delta}  {\Omega_{2n-1} \over \Psi_{Z_n} ^2 }  \in \R\ ,
\end{equation}
where $\Delta = \{ (\alpha_1: \ldots : \alpha_{2n}): \alpha_i \geq 0\} \subset \Pro^{2n-1}(\R)$ is the standard coordinate simplex, 
$$
\Omega_{2n-1} = \sum_{i=1}^{2n} (-1)^i \alpha_i \, {\mathrm d}\alpha_1\wedge \ldots \widehat{ {\mathrm d} \alpha_i} \ldots \wedge{\mathrm d} \alpha_{2n} \ ,
$$
and $\Psi_{Z_n} \in \Z[ \alpha_1, \ldots, \alpha_{2n}]$ is the graph, or Kirchhoff \cite{KIR}, 
polynomial of $Z_n$. It is  defined more generally for any graph $G$ by the formula
$$
\Psi_G = \sum_{ T \subset G} \prod_{e \notin T} \alpha_e\ ,
$$
where the sum is over all spanning trees $T$ of $G$. Since the degree of $\Psi_{Z_n}$ is equal to $n$, it follows that the integrand of $(\ref{IZdef})$ is a homogeneous $2n-1$-form  on the complement 
of the graph hypersurface $V(\Psi_G) $ in $\Pro^{2n-1}$. The integral $(\ref{IZdef})$ can be interpreted as the period of  a mixed Hodge structure \cite{BEK}.  
 It is known that the mixed Hodge structures of graph hypersurfaces can be extremely complicated \cite{BB,  K3, BD};  the mixed Hodge structures corresponding to the zig-zag graphs should be equivalent to the simplest non-trivial situation, namely  an extension of 
 $\Q(3-2n)$ by $\Q(0)$.

For $n=3, 4$ the zig-zag graphs  $Z_n$ are isomorphic to the wheels with $n$ spokes  $W_n$, whose periods are known for all $n$ by Gegenbauer polynomial techniques \cite{B2}.
For $n\geq 5$, the graphs $W_n$ are unphysical, and different from the  $Z_n$. The period for $Z_5$ was computed by Kazakov in 1983  \cite{Kazakov}, for $Z_6$ by  Broadhurst in 1985 \cite{B1} 
(see also  \cite{Ussy}), and the cases $Z_n$ for $n \leq 12$ can now be obtained by computer \cite{Graphical} using single-valued multiple polylogarithms \cite{BrSVP}. 
The period of $Z_n$ is  \emph{a priori}  known to be a multiple zeta value of weight $2n-3$ either by this method, or by the general method of  parametric integration  of \cite{BrFeyn}.
The  precise formula for its period was conjectured in \cite{BK}.

\begin{thm} \label{mainthm} (Zig-zag conjecture \cite{BK}). The period of the graph $Z_n$ is given by
\begin{equation} \label{IZ}
I_{Z_n} =  4 {(2n-2)! \over n! (n-1)!} \Big( 1 - {1- (-1)^n \over 2^{2n-3}}\Big) \zeta(2n-3)\ .
\end{equation}
\end{thm}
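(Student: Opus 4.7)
The plan is to combine the three ingredients mentioned in the introduction: the second author's graphical function formalism, a variant of the first author's theory of single-valued multiple polylogarithms adapted to it, and Zagier's theorem on $\zeta(\{2\}^a,3,\{2\}^b)$. The strategy is to express $I_{Z_n}$ as a rational linear combination of multiple zeta values of that restricted shape, then apply Zagier's formula, and finally extract the coefficient of $\zeta(2n-3)$ after showing that all transcendental powers of $\pi$ cancel.

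First, I would reinterpret the parametric integral $(\ref{IZdef})$ in position space via graphical functions. Choosing two external vertices along the ``spine'' of $Z_n$, the period is recovered as the limit of a graphical function $f_{Z_n}(z)$, a single-valued real-analytic function on $\C\setminus\{0,1\}$. The zig-zag admits a recursive construction in which $Z_{n+1}$ is obtained from $Z_n$ by attaching one additional triangle along the spine; on graphical functions this corresponds to an explicit integral operator $T$, essentially a twisted inverse of the four-dimensional Laplacian. Thus $f_{Z_n}$ is obtained from the graphical function of a small base case (the triangle, or equivalently $Z_3$) by $n-2$ iterations of $T$, which I would make completely explicit.

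Second, I would show that $T$ preserves the relevant space of single-valued multiple polylogarithms in $z$ and adds letters from $\{0,1\}$ in a controlled way. Since the base case is itself a single-valued polylogarithm (of Bloch--Wigner type), it follows by induction that $f_{Z_n}(z)$ is a specific single-valued multiple polylogarithm indexed by a word encoding the sequence of triangle attachments. Passing to the limit $z\to 1$ (with tangential base-point regularisation) expresses $I_{Z_n}$ as an explicit $\Q$-linear combination of multiple zeta values of weight $2n-3$; the restrictive shape of the words produced by iterating $T$ should confine the resulting multiple zetas to Zagier's family $\zeta(\{2\}^a,3,\{2\}^b)$, together possibly with the even-weight constant $\zeta(\{2\}^{n-1})$.

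Third, I would substitute Zagier's closed form for $\zeta(\{2\}^a,3,\{2\}^b)$ as a rational combination of $\zeta(2m+1)\pi^{2k}$ and collect. The main obstacle I anticipate is the final combinatorial identity: showing that the contributions with $k\geq 1$ all cancel, so that only the pure $\zeta(2n-3)$ term survives. This cancellation must in some sense encode the fact (alluded to in the introduction) that the motivic period of $Z_n$ lies in an extension of $\Q(3-2n)$ by $\Q(0)$ and hence cannot involve any genuine power of $\pi^2$. Verifying this cancellation, and identifying the surviving rational coefficient with the Catalan-type factor $4(2n-2)!/(n!(n-1)!)\big(1-(1-(-1)^n)/2^{2n-3}\big)$ of $(\ref{IZ})$ from the binomial coefficients in Zagier's formula summed against the word-decomposition of $f_{Z_n}$, will be the delicate combinatorial heart of the proof.
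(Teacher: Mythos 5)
Your first two steps (recasting $I_{Z_n}$ as the value at a point of a graphical function built by iterating a triangle-appending integral operator, and showing this function is a single-valued multiple polylogarithm indexed by an alternating word) match the framework the paper uses in \S5. But your third step contains the genuine gap, and it is precisely the gap the introduction warns about: if you solve the differential system with the \emph{generic} single-valued multiple polylogarithms of \cite{BrSVP} and then take the limit at the special point, you obtain a $\Q$-linear combination of weight-$(2n-3)$ multiple zeta values whose reduction to a single $\zeta(2n-3)$ is exactly the content of the theorem. The paper states explicitly that this route only proves the conjecture modulo products of multiple zeta values. Your proposal defers the entire difficulty to ``verifying the cancellation'' of the $\zeta(2m+1)\pi^{2k}$ terms after substituting Zagier's formula, without any mechanism for doing so; no amount of massaging Zagier's identity alone will produce that cancellation, because the coefficients with which the Hoffman zetas enter depend on the (not completely explicit) constants in the generic single-valued construction. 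A further concrete obstruction: the limit also involves the shuffle-regularized singular values $\zeta_1(2^{\{m\}}3\,2^{\{n\}})$, whose evaluations are unknown, so one must also arrange for these to drop out --- something your outline does not address.

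The paper's resolution is structurally different. Rather than computing with generic single-valued polylogarithms and cancelling at the end, it \emph{constructs} two tailor-made generating series $\Stot$ and $\lStot$ whose coefficients are, by definition, integral combinations of odd zeta values and their products (with explicit binomial coefficients), and then proves that the resulting functions $F_w=\sum L_{\widetilde{u}_1}(\overline{z})\,\Stot_{u_2}L_{u_3}(z)$ are single-valued for words in the restricted classes $\B^0$, $\B^1$. Zagier's theorem is used \emph{inside} that single-valuedness proof --- it is what makes the factorization $2\vlll=\voll\Slol-\Sllo\voll$ of the monodromy data into a ``pure odd zeta'' part and a ``pure power of $\pi$'' part possible, and the singular Hoffman series is shown to cancel there (Lemma on $V^{\mathrm{sing}}$). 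Once single-valuedness and the uniqueness statement from the theory of graphical functions are in hand, the period is read off directly as a difference of two coefficients of $\Stot$ (or $\lStot$), which is a single explicit multiple of $\zeta(2n-3)$ with no cancellation left to perform. To repair your proposal you would need to supply this (or an equivalent) mechanism for why no genuine power of $\pi^2$ can survive; appealing to the expected motivic structure of $Z_n$, as you do, is not a proof.
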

Using the well-known fact that the period of  a two-vertex join of a family of graphs  is the product of their periods, we immediately deduce: 
\begin{cor} \label{oddcor} Any product of  odd zeta values $\prod_{i=1}^N \zeta(2n_i+1)$, for $n_i\geq 1$,  occurs  as  the period of a primitive logarithmically-divergent graph in   $\phi_4^4$ theory.
\end{cor}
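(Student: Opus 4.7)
The plan is to bootstrap Theorem \ref{mainthm} into the corollary by constructing, for each product of odd zeta values, a single graph whose period equals that product up to a nonzero rational constant. The first ingredient is that the coefficient
$$c_n := 4\,\frac{(2n-2)!}{n!\,(n-1)!}\Big(1-\frac{1-(-1)^n}{2^{2n-3}}\Big)$$
appearing in Theorem \ref{mainthm} is a \emph{strictly positive} rational for every $n\geq 3$: the factorial prefactor is clearly positive, and the bracketed factor equals $1$ when $n$ is even and $1-2^{3-2n}>0$ when $n$ is odd. Consequently every odd zeta value $\zeta(2m+1)$ with $m\geq 1$ equals $c_{m+2}^{-1}\,I_{Z_{m+2}}$, a nonzero rational multiple of the period of a primitive log-divergent graph in $\phi_4^4$.

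Given a product $\prod_{i=1}^N \zeta(2n_i+1)$ with $n_i\geq 1$, I would then form the iterated two-vertex join $G:=Z_{n_1+2}\cdot Z_{n_2+2}\cdots Z_{n_N+2}$, gluing successively along a suitably chosen pair of vertices in each factor. Invoking the multiplicativity of periods under two-vertex joins noted by the authors just before the statement,
$$I_G \;=\; \prod_{i=1}^N I_{Z_{n_i+2}} \;=\; \Big(\prod_{i=1}^N c_{n_i+2}\Big)\prod_{i=1}^N \zeta(2n_i+1),$$
which realises $\prod_i\zeta(2n_i+1)$ as a nonzero rational multiple of $I_G$ (the customary meaning of ``occurring as a period'').

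To close the argument I would still need to confirm that $G$ is a genuine primitive, logarithmically-divergent graph of $\phi_4^4$: namely that $4$-regularity is preserved by the two-vertex join performed between the appropriate pairs of vertices of the completed graphs; that the log-divergence condition $|E(G)|=2L(G)$ is additive under the join; and that a hypothetical logarithmically-divergent proper subgraph of $G$ would have to restrict non-trivially to one of the factors $Z_{n_i+2}$, contradicting their primitivity. These are essentially bookkeeping checks on vertex and edge counts.

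The only substantive point is the multiplicativity statement itself, which the authors treat as well-known. It follows from the factorisation $\Psi_G=\Psi_{G_1}\Psi_{G_2}$ of the Kirchhoff polynomial under a two-vertex join (a classical matroidal identity on spanning trees), after which the parametric integral $(\ref{IZdef})$ for $G$ separates as a product of the two parametric integrals by Fubini. Given this fact, the corollary is a direct packaging of Theorem \ref{mainthm} and I do not anticipate any serious technical obstacle.
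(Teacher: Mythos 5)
Your top-level deduction is exactly the paper's: Corollary~\ref{oddcor} is read off from Theorem~\ref{mainthm} plus the multiplicativity of the period under two-vertex joins, and your only genuinely new input --- that the rational coefficient in $(\ref{IZ})$ never vanishes --- is correct (modulo a harmless slip: for odd $n$ the bracket is $1-2^{4-2n}$, not $1-2^{3-2n}$, as the computation at the end of \S5 confirms).

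However, the two places where you go beyond the paper's one-line citation both contain real errors. First, the Kirchhoff polynomial does \emph{not} factorize under a two-vertex join. If $G=G_1\cup G_2$ with $V(G_1)\cap V(G_2)=\{a,b\}$ and no common edges, a spanning tree of $G$ restricts to a spanning tree on exactly one side and to a spanning two-forest separating $a$ from $b$ on the other, so
$$
\Psi_G \;=\; \Psi_{G_1}\,\Phi^{a,b}_{G_2}\;+\;\Phi^{a,b}_{G_1}\,\Psi_{G_2}\ ,
$$
where $\Phi^{a,b}$ is the corresponding spanning-two-forest polynomial; the product formula $\Psi_G=\Psi_{G_1}\Psi_{G_2}$ is the identity for a \emph{one}-vertex join, which is not the operation needed here (and yields a non-primitive graph). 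Hence your Fubini argument does not get off the ground: the multiplicativity of the period under two-vertex joins is a genuinely nontrivial fact, proved either by fixing the two shared vertices using conformal invariance of the position-space integral or by a careful manipulation of the two-forest decomposition above; see \cite{SchnetzCensus}. Second, the ``bookkeeping'' is not additive as you assume: identifying two vertices of $G_1$ with two of $G_2$ gives $|V(G)|=|V(G_1)|+|V(G_2)|-2$ and $|E(G)|=|E(G_1)|+|E(G_2)|$, hence $h_1(G)=h_1(G_1)+h_1(G_2)+1$ and $|E(G)|=2h_1(G)-2$, so the naive join is \emph{not} logarithmically divergent. The correct product also deletes edges --- e.g.\ identify a pair of vertices that are adjacent in each factor and delete both copies of the joining edge, so that $h_1(G)=h_1(G_1)+h_1(G_2)-1$ and $|E(G)|=2h_1(G)$ --- or, most cleanly, is defined on the completed $4$-regular graphs as in \cite{SchnetzCensus}. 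With the join set up correctly and the multiplicativity cited rather than ``proved'' by factorization of $\Psi$, your argument coincides with the paper's.
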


The strategy of our proof is to compute the amplitude  of the zig-zag graphs in position space by direct integration. 
At each integration step,  one has to solve  a unipotent differential equation in $\partial/ \partial z$ and $\partial / \partial \overline{z}$ on $\Pro^1\backslash \{0,1,\infty\}(\C)$, whose solution is necessarily single-valued. 
Such a method was first introduced by Davidychev and Ussyukina in \cite{Ladders} for a family of ladder diagrams. The functions they  obtained are  single-valued versions of the classical polylogarithms
$\Li_n(z)= \sum_{k\geq 1 } {z^k \over k^n}.$   A  broad generalisation of this method was recently found independently  by Schnetz \cite{Graphical} and  Drummond \cite{Drummond}, and  works for a large class of  graphs. It  uses  the fact  that any unipotent differential equation on $\Pro^1\backslash \{0,1,\infty\}$ can be solved using the single-valued multiple polylogarithms constructed in \cite{BrSVP}. 
Unfortunately, the definition of these functions is complicated and not completely explicit, so the best one can presently do by this method is to prove the zig-zag conjecture modulo products of multiple zeta values
\cite{Talk}, \cite{Graphical}.  Therefore this approach  fails  to predict the most important property of the zig-zag periods, which is that they reduce to a single Riemann zeta value.
 Experimental evidence suggests that the zig-zags may be the only $\phi^4$ periods with this property \cite{SchnetzCensus}. 

In this paper we take a  different approach, and modify the construction of the single-valued polylogarithms of \cite{BrSVP} to write down a  specific family of single-valued functions which are tailor-made for the zig-zag graphs. It does not generalise to all multiple polylogarithms, although we expect that some extensions of the present method are 
possible. The construction  relies on some special properties of the Hoffman multiple zeta values $\zeta(2,\ldots, 2, 3, 2, \ldots, 2)$ and uses a  factorization of various non-commutative
generating series into a `pure odd zeta' and `pure even zeta' part.

\subsection{Two families  of single-valued multiple polylogarithms}
Most of the paper (\S3 and \S4) is devoted to 
constructing the following explicit  families of functions. Recall that $\R\langle \langle \x_0, \x_1 \rangle \rangle$ denotes the ring of 
formal power series in two non-commuting variables $\x_0$ and $\x_1$. For any element $S \in \R\langle \langle \x_0, \x_1 \rangle \rangle$, let 
$\widetilde{S}$ denote the series obtained by reversing the letters in every word which occurs in $S$. For any word $w\in \{\x_0, \x_1\}^{\times}$, let 
$L_w(z)$ denote the multiple polylogarithm in one variable, defined by  the equations
$$
{d \over dz } L_{w  \x_i } (z) = {L_w (z) \over z-i} \quad \hbox{ for all } i = 0,1,
$$
and the condition $L_w(z) \sim 0$ as $z\rightarrow 0$ for all words $w$ not of the form $\x_0^n$, and 
$L_{\x_0^n}(z) = {1 \over n!} \log^n(z)$. The $L_w(z)$ are multi-valued functions on $\Pro^1 \backslash \{0,1,\infty\}(\C)$.

\begin{defn} \label{S0def} Define a formal power series  $\Stot \in \R \langle \langle \x_0, \x_1\rangle \rangle$ by  
$$
\Stot = 1+ \Sooo + \Slol + \Sllo + \Sloo \ ,
$$
where $\Slol = \widetilde{\Sllo}$, and 
\begin{eqnarray} \label{Sdef}
\Sooo & = & -4 \sum_{n \geq 1 }    \zeta(2n+1)\,   (\x_0 \x_1)^n \x_0 \ , \\
\Sllo & = & -4  \sum_{m\geq 1, n   \geq 0 }   \binom{2m+2n}{2m} \zeta(2m+2n+1) \,  (\x_1 \x_0)^{m}  \x_0   (\x_1 \x_0)^n \ ,\nonumber  \\
\Sloo & = &   {1 \over 2 } \big( \Sooo \Sooo + \Sooo \Sllo + \Slol \Sooo  \big)\ . \nonumber 
\end{eqnarray} 
For all $w \in \{ \x_0, \x_1\}^{\times}$, let $\Stot_w$ be  the coefficient of $w$ in $\Stot$. It is either an integer multiple of an odd single  zeta value $\zeta(2n+1)$, $n\geq 1$, or an integral  linear combination of 
products of two odd 
single zeta values $\zeta(2n+1)\zeta(2m+1)$, for $m, n\geq 1$.
\end{defn}
Let  $\B^0$ denote the set of words $w\in \{ \x_0, \x_1 \}^{\times}$  which contain no subsequences of the form $\x_1 \x_1$ or $\x_0 \x_0 \x_0$, and  have at most
one subsequence of the form $\x_0\x_0$.  These properties  are clearly stable under reversing the letters in a word, or taking a subsequence. For every $w\in \B^0$,  define a series 
\begin{equation} \label{Soldef}
F_w(z) = \sum_{w = u_1 u_2 u_3}  L_{\widetilde{u}_1} (\overline{z}) \Stot_{u_2} L_{u_3} (z) \ ,
\end{equation} 
where  $\widetilde{w}$ denotes a word $w$ written in reverse order. \emph{A priori} $F_w(z)$ is a
multivalued, real analytic  function on $\Pro^1 \backslash \{0,1, \infty\}$.  In \S3 we prove the following theorem.
\begin{thm}  \label{theoremSV1} If $w \in \B^0$, the function  $F_w(z)$ is  single-valued, and  satisfies 
$$
F_w(z)=F_{\widetilde{w}}(\overline{z})\ .
$$ 
Let $i,j \in \{0,1\}$. If $\x_i w \x_j \in \B^0$, then 
\begin{equation} \label{SV1ODE} 
{\partial^2 \over \partial z \partial \overline{z}} F_{\x_i w  \x_j } (z) = {F_w(z) \over (\overline{z} - i)(z-j)} \ .
\end{equation}
\end{thm}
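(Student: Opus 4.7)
The plan is to recast the family $\{F_w(z)\}$ as the coefficients of a non-commutative generating series
$$
G(z) \;:=\; \widetilde{L}(\overline{z})\,\Stot\, L(z) \;\in\; \R\langle\langle \x_0, \x_1\rangle\rangle,
$$
where $L(z) := \sum_w L_w(z)\,w$ is the generating series of the multi-valued polylogarithms $L_w$, and $\widetilde{L}(\overline{z}) = \sum_w L_{\widetilde{w}}(\overline{z})\,w$. Expanding $G(z)$ recovers the defining sum for $F_w(z)$. The three assertions of the theorem then correspond to three properties of $G(z)$, which I would verify in order of increasing difficulty.

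The reversal symmetry $F_w(z) = F_{\widetilde{w}}(\overline{z})$ follows from the involutive identity $\widetilde{\Stot} = \Stot$. This is checked directly from Definition \ref{S0def}: every word in $\Sooo$ is a palindrome of the form $(\x_0\x_1)^n\x_0$; reversal exchanges $\Sllo$ with $\Slol$ by construction; and the polynomial defining $\Sloo$ is then manifestly reversal-invariant. Applying the reversal antihomomorphism to $G(z)$ yields $\widetilde{L}(z)\,\Stot\,L(\overline{z})$, whose coefficient of $\widetilde{w}$ is $F_{\widetilde{w}}(\overline{z})$; since reversal sends the coefficient of $w$ to that of $\widetilde{w}$, this also equals $F_w(z)$.

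The differential equation (\ref{SV1ODE}) is purely formal. Using $\partial_z L_{u'\x_j}(z) = L_{u'}(z)/(z-j)$, only decompositions $w = u_1 u_2 u_3$ with $u_3$ ending in the last letter of $w$ contribute to $\partial_z F_w$, yielding $\partial_z F_{w_-\x_j}(z) = F_{w_-}(z)/(z-j)$. Symmetrically $\partial_{\overline{z}} F_{\x_i w_+}(z) = F_{w_+}(z)/(\overline{z}-i)$, and composing these gives (\ref{SV1ODE}).

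The hard part is single-valuedness. Using the standard monodromies $L(z) \mapsto L(z)\,e^{2\pi i \x_0}$ around $z = 0$ and $L(z) \mapsto L(z)\,\Phi^{-1}\,e^{2\pi i \x_1}\,\Phi$ around $z = 1$ (with $\Phi$ the Drinfeld associator), together with the corresponding monodromies for $\widetilde{L}(\overline{z})$ (obtained by applying reversal and noting that $\overline{z}$ traverses its loops in the opposite sense), one finds that the monodromy of $G(z)$ takes the form $G(z) \mapsto e^{-2\pi i \x_0}\,G(z)\,e^{2\pi i \x_0}$ around $z=0$, and an analogous two-sided conjugation involving $\Phi$ around $z=1$. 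Single-valuedness of $F_w$ is then equivalent to the vanishing of the coefficient of $w$ in each monodromy difference. Around $z=0$ this reduces, after writing $w = \x_0^a v \x_0^b$ with $v$ not beginning or ending in $\x_0$, to a cancellation among shifts $F_{\x_0^{a-k}v\x_0^{b-m}}$; the constraints defining $\B^0$ force $a,b \leq 2$, leaving only a handful of explicit checks. The principal obstacle is the monodromy around $z=1$: since $\Phi$ contains all motivic MZVs, verifying the cancellation for general $w$ is hopeless. For $w \in \B^0$, however, the absence of $\x_1\x_1$ substrings restricts the contributing MZVs to the Hoffman family $\zeta(2,\ldots,2,3,2,\ldots,2)$, and the coefficients in $\Sooo$ and $\Sllo$ (notably the binomial factors $\binom{2m+2n}{2m}$) are engineered, via Zagier's theorem, so that these Hoffman MZVs cancel the $\Phi$-contributions precisely for words in $\B^0$. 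The failure of this cancellation outside $\B^0$ — where additional forbidden substrings would require MZVs beyond the Hoffman family — is what restricts the validity of the construction.
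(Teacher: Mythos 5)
Your overall strategy coincides with the paper's: package the $F_w$ into the generating series $\widetilde{L}(\overline{z})\,\Stot\,L(z)$, read off the differential equation from the KZ equation, obtain the reversal symmetry from $\widetilde{\Stot}=\Stot$, and reduce single-valuedness to monodromy conditions on $\Stot$ modulo the ideal annihilating the span of $\B^0$. The first three points are fine. The gap is in the monodromy at $z=1$, which you reduce to the assertion that the coefficients of $\Sllo$ ``are engineered, via Zagier's theorem, so that these Hoffman MZVs cancel the $\Phi$-contributions.'' That is precisely the statement requiring proof, and it is where essentially all of the work of \S3 lies. Concretely: the condition becomes $V_-\Stot+\Stot V\equiv 0\pmod{I_H}$ with $V=\ZZ\x_1\ZZ^{-1}$, and after splitting $V$ by level and by initial/final letter this decomposes into several identities (lemma \ref{lem0equations}). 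The central one, $2\,\vlll=\voll\Slol-\Sllo\voll$, is equivalent to the letter-reversal invariance of an explicit triple sum, and it is only there that Zagier's formula (\ref{ZagierFormula}) enters, through the cancellation of one of the two binomial families $A^r_a$, $B^r_b$ against the coefficients of $\Sllo$. None of this is automatic from the shape of $\Sllo$; your proposal asserts the outcome without exhibiting the identity. (Your treatment of the monodromy at $0$ is also understated: the condition $[\Stot,\x_0]\equiv 0\pmod{I_H}$ is an infinite family of coefficient identities such as $\Sllo(a,b)=\Sllo(b,a)$, not ``a handful of explicit checks,'' though each is immediate from the binomial coefficients.)

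A second, independent omission: words in $\B^0$ may begin or end with $\x_0\x_0$, so the relevant coefficients of $V$ involve the shuffle-regularized values $\zeta_1(2^{\{m\}}3\,2^{\{n\}})$ (the series $\Zs$ of the paper), whose values are \emph{not} known --- Zagier's theorem says nothing about them. Your sketch restricts attention to ``the Hoffman family $\zeta(2,\ldots,2,3,2,\ldots,2)$'' and therefore cannot close the argument on those words. The paper handles this by proving $2V^{\mathrm{sing}}=\Sooo\vlll\Sooo$ (lemma \ref{lem38}), which shows that $\Zs$ cancels identically out of the conditions on $\Stot$; without such a cancellation the verification could not be completed. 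You need either to prove this cancellation or to find another way to control the regularized Hoffman values.
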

The second family of functions is defined as follows.
\begin{defn} \label{S1def} Define a formal power series  $\lStot \in \R \langle \langle \x_0, \x_1\rangle \rangle$ by  
$$
\lStot = 1+ \lSooo + \lSlol + \lSllo + \lSloo \ ,
$$
where $\lSooo =  \Sooo$, $ \lSlol  =  \widetilde{\lSllo} $, and 
\begin{eqnarray} \label{S1defeq}
&&\\
\lSllo & = & -4 \sum_{m\geq 0, n   \geq 1 }  (1-2^{-2n-2m}) \binom{2m+2n}{2m+1} \zeta(2m+2n+1) \,  (\x_1 \x_0)^{m}  \x_1   (\x_1 \x_0)^n \nonumber \ ,\\
\lSloo & = & {1 \over 2 } \big(\lSooo \lSllo + \lSlol \lSooo  \big)\ . \nonumber 
\end{eqnarray} 
Note that, contrary to the previous case, the coefficients of the odd single zeta values  and their products  in $\lStot $ now have large powers of $2$ in their denominators. 
\end{defn}
Now let  $\B^1$ denote the set of words $w$ obtained from $\B^0$ by interchanging $\x_0$ and $\x_1$. 
Thus words $w\in \B^1$ contain no $\x_0\x_0$, no $\x_1\x_1\x_1$ and at most one $\x_1\x_1$. 
For every $w\in \B^1$,  define a series 
\begin{equation} \label{Soldef1}
\hat{F}_w(z) = \sum_{w = u_1 u_2 u_3}  L_{\widetilde{u}_1} (\overline{z}) \lStot_{u_2} L_{u_3} (z)  \ .
\end{equation} 
In \S4 we prove the following theorem.
\begin{thm}  \label{theoremSV2} If $w \in \B^1$, the function  $\hat{F}_w(z)$ is  single-valued, and  satisfies 
\begin{equation} 
\hat{F}_w(z) = \hat{F}_{\widetilde{w}}(\overline{z})\ . 
\end{equation}
Let $i,j \in \{0,1\}$. If $\x_i w \x_j \in \B^1$, then 
\begin{equation} \label{SV2ODE} 
{\partial^2 \over \partial z \partial \overline{z}}  \hat{F}_{\x_i w  \x_j } (z) = {\hat{F}_w(z) \over (\overline{z} - i)(z-j)} \ .
\end{equation}
\end{thm}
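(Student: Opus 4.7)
My plan is to mirror the proof of Theorem \ref{theoremSV1}, since the definition \eqref{Soldef1} is the direct analogue of \eqref{Soldef} with $\lStot$ in place of $\Stot$. First, package the polylogs $L_w$ into a non-commutative generating series $\mathbf{L}(z) = \sum_w L_w(z)\,w$, which satisfies the Knizhnik--Zamolodchikov equation $d\mathbf{L}(z) = \mathbf{L}(z)\bigl(\tfrac{\x_0}{z}+\tfrac{\x_1}{z-1}\bigr)dz$, and set $\widetilde{\mathbf{L}}(\overline z) = \sum_w L_{\widetilde w}(\overline z)\,w$, which satisfies $d\widetilde{\mathbf{L}}(\overline z) = \bigl(\tfrac{\x_0}{\overline z}+\tfrac{\x_1}{\overline z-1}\bigr)\widetilde{\mathbf{L}}(\overline z)\,d\overline z$. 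Then \eqref{Soldef1} repackages as the single identity
$$
\mathbf{F}(z) := \sum_w \hat F_w(z)\,w \;=\; \widetilde{\mathbf{L}}(\overline z)\,\lStot\,\mathbf{L}(z).
$$

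With this reformulation the differential equation \eqref{SV2ODE} becomes purely formal: differentiating the product and using the two KZ equations gives
$$
\frac{\partial^2\mathbf{F}(z)}{\partial\overline z\,\partial z}\;=\;\sum_{i,j\in\{0,1\}}\frac{\x_i\,\mathbf{F}(z)\,\x_j}{(\overline z-i)(z-j)},
$$
and extracting the coefficient of $\x_i w\x_j$ yields \eqref{SV2ODE} whenever $\x_i w\x_j\in\B^1$. The symmetry $\hat F_w(z)=\hat F_{\widetilde w}(\overline z)$ reduces to the reversal-invariance $\widetilde{\lStot}=\lStot$, which can be checked directly: each word $(\x_0\x_1)^n\x_0$ in $\lSooo = \Sooo$ is a palindrome, $\lSlol$ and $\lSllo$ are exchanged by definition, and $\widetilde{\lSloo} = \tfrac12(\widetilde{\lSllo}\widetilde{\lSooo}+\widetilde{\lSooo}\widetilde{\lSlol})=\tfrac12(\lSlol\lSooo+\lSooo\lSllo)=\lSloo$.

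The substantive part is single-valuedness of $\hat F_w$ for $w\in\B^1$. The monodromy of $\mathbf{L}(z)$ and $\widetilde{\mathbf{L}}(\overline z)$ around $z=0$ is governed by the residue $\x_0$ of the KZ connection, and around $z=1$ by the residue $\x_1$ conjugated by the Drinfeld associator $\Phi_{KZ}$. Single-valuedness of $\mathbf{F}(z)$ on the $\B^1$-component is equivalent to a pair of algebraic identities for $\lStot$. Around $z=0$ the analysis collapses because no word of $\B^1$ contains $\x_0\x_0$: the commutator corrections produced by conjugation with $e^{\pm 2\pi i\x_0}$ all involve two or more consecutive $\x_0$'s and are therefore killed by projection onto $\B^1$, leaving only a short verification from the explicit shape of $\lSllo$.

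The monodromy at $z=1$ is the main calculation. Expanding the conjugations by $\Phi_{KZ}$ and projecting onto $\B^1$ produces a condition whose coefficients are polynomials in the Hoffman MZVs $\zeta(2,\ldots,2,3,2,\ldots,2)$ and in $\zeta(2)^k$. Zagier's theorem \cite{Zagier,Li} expresses each Hoffman MZV as an explicit rational combination of $\zeta(2m+1)\pi^{2k}$, so that the condition factorises into a \emph{pure odd zeta} and a \emph{pure even zeta} part. The coefficients $(1-2^{-2n-2m})\binom{2m+2n}{2m+1}$ appearing in $\lSllo$ are precisely those needed to cancel the pure odd part modulo $\B^1$; that the rational factor happens to equal $1-2^{1-(2m+2n+1)}$, the coefficient relating $\zeta(s)$ to the alternating Dirichlet eta value $\eta(s)$, is an artefact of Zagier's formula reflecting the $\x_0\leftrightarrow\x_1$ duality between $\B^0$ and $\B^1$ together with the distinct shuffle regularizations of $L_w$ at $z=1$ and $z=0$. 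The pure even part is absorbed by $\lSooo$ and the symmetric $\lSloo$ in the same manner as for Theorem \ref{theoremSV1}. The main difficulty is the bookkeeping required to isolate the pure odd part and identify these particular coefficients as the solution of the resulting cancellation system.
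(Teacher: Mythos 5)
Your strategy is the same as the paper's: package $\hat F_w$ into $\hat F(z)=\widetilde L(\overline z)\,\lStot\, L(z)$, observe that the differential equation and the symmetry $\hat F_w(z)=\hat F_{\widetilde w}(\overline z)$ are then formal consequences of the KZ equation and of $\widetilde{\lStot}=\lStot$, and reduce single-valuedness to monodromy conditions on $\lStot$ modulo the annihilator ideal $I_{\hat H}$ of $\B^1$, to be verified with Zagier's evaluation of $\zeta(2^{\{a\}}3\,2^{\{b\}})$. However, two genuine gaps remain. First, ``mirroring'' Theorem \ref{theoremSV1} fails at the monodromy around $z=1$: words of $\B^1$ may contain one factor $\x_1\x_1$, so $V^2\not\equiv 0\pmod{I_{\hat H}}$ for $V=\ZZ\x_1\ZZ^{-1}$, and $W=\ZZ e^{2\pi i\x_1}\ZZ^{-1}$ must be expanded to second order. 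The condition $\widetilde{\overline W}\lStot W\equiv\lStot$ then yields, besides the linear equation $V_-\lStot+\lStot V\equiv0$, a quadratic one, and you must show (as the paper does by multiplying by $V^2$, using $V^3\equiv0$, and separating real and imaginary parts) that the quadratic condition follows from the linear one. Your account of the monodromy at $z=0$ is also imprecise: the first commutator $[\lStot,\x_0]$ is \emph{not} killed by $I_{\hat H}$ (e.g.\ $\x_0\lSllo$ contains no $\x_0\x_0$); it is precisely the surviving condition $\lSlol\x_0=\x_0\lSllo$, which holds only because of the symmetry $\binom{2a+2b-2}{2a-1}=\binom{2a+2b-2}{2b-1}$ of the chosen coefficients.

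Second, the heart of the matter --- that the coefficients $(1-2^{-2m-2n})\binom{2m+2n}{2m+1}$ actually solve the cancellation system --- is asserted rather than proved. Concretely one must establish $2\,\lvlll=\voll\lSlol-\lSllo\voll$, which requires: (a) decomposing $\hat V^1_{*,*}$ in terms of new generating series whose coefficients come from the Hoffman zetas via the duality $\zeta(\x_{i_1}\cdots\x_{i_n})=(-1)^n\zeta(\x_{1-i_n}\cdots\x_{1-i_1})$ --- it is this duality (not a property of the eta function) that makes the $B^r_b=(1-2^{-2r})\binom{2r}{2b+1}$ term of Zagier's formula cancel here, whereas the $A^r_a$ term cancelled in the $\B^0$ case; (b) checking that the series of \emph{singular} values $\zeta_1(2^{\{m\}}3\,2^{\{n\}})$, whose numerical values are unknown, drops out via the identity $2\hat V^{\mathrm{sing}}=\Sooo\lvlll\Sooo$; and (c) a change of summation variables exhibiting the resulting double sum as invariant under letter reversal. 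Without these steps the proposal is a correct roadmap, not a proof.
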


\begin{rem}
It is worth noting that the definition of $\Stot$ in (\ref{Sdef}) is compatible with the definition of $\lStot$ in (\ref{S1defeq}), i.e., $\Stot_w = \lStot_w$ for all 
$w\in \B^0 \cap \B^1$.
This means that it is possible to  combine the previous theorems into a single generating series
$$
F_w(z) = \sum_{w = u_1 u_2 u_3}  L_{\widetilde{u}_1} (\overline{z}) \mathcal{T}_{u_2} L_{u_3} (z)\  ,
$$
for $ w\in \B^0 \cup \B^1$,  where $\mathcal{T} = \Stot + \lStot - 1 - \Sooo$. However, this ansatz does not give single-valued functions  for all words $w$ in $\{\x_0,\x_1\}^\times$.
Since the two previous theorems are rather different in character, and play completely different roles in the zig-zag conjecture, we decided to keep their statement and proofs separate.
\end{rem}

The zig-zag conjecture itself is proved in \S5 using the two previous theorems to deal with the case when $n$ is even, or odd, respectively.
\subsection{Some remarks}
All available data suggests \cite{SchnetzCensus} that the zig-zag graphs play the same role in $\phi^4$ theory as the odd single zeta values $\zeta(2n+1)$ in the theory of multiple zeta values. The latter 
are the  primitive elements  in the algebra of motivic multiple zeta values  and correspond to the generators of the Lie algebra of the motivic Galois group
of mixed Tate motives over $\Z$.  Corollary \ref{oddcor} has the important consequence that  the periods of $\phi^4$ theory  are closed under the action of the motivic Galois group, to all known weights \cite{motivicphi4}.
It would therefore be very interesting to prove by motivic methods that $I_{Z_n}$ is an  odd  single zeta value. The fact that the multiple zeta values $\zeta(2,\ldots, 2, 3, 2,\ldots, 2)$ appear in the analytic calculation
may give some insight into a long hoped-for formula for the  motivic coaction on $\phi^4$ amplitudes.
At present, this is out of reach, but a small step in this direction was taken in \cite{K3}, where
we gave an  explicit formula for the  class of the zig-zag graph hypersurfaces $V(\Psi_{Zn}) \subset \Pro^{2n-1}$ in the Grothendieck ring of varieties. It is a polynomial in the Lefschetz motive.

Note  that it is known by the work of Rivoal and Ball-Rivoal  \cite{B-R} that the odd zeta values span an infinite dimensional vector space over the field of rational numbers.
Thus the same conclusion holds for the periods of primitive graphs in $\phi^4$ theory. In the early days of quantum field theory, the hope was often expressed that the periods 
would be rational numbers, so corollary \ref{oddcor} forms part of  an increasing body of evidence (see also  \cite{K3}), that this is very far from the truth.
\vskip2ex

\emph{Acknowledgements.} This paper was written in July 2012 whilst both authors were visiting scientists at Humboldt University, Berlin. We thank Dirk Kreimer for putting us in the same office, which led to the birth of this paper. Francis Brown is supported by ERC grant 257638.

\section{Preliminaries}

\subsection{Reminders on shuffle algebras and formal power series}
Let $R$ be a commutative unitary ring. The shuffle algebra $R\langle \x_0, \x_1 \rangle$ on two letters is the  free  $R$-module spanned by 
all words $w$ in the letters $\x_0, \x_1$, together with the empty word $1$. The shuffle product is defined recursively by  $w \sha 1 = 1 \sha w = w$ and 
$$
\x_i w \sha \x_j w'  = \x_i ( w \sha \x_j w') + \x_j ( \x_i w \sha w')
$$
for all  $i, j \in \{0,1\}$, and $w, w' \in \{ \x_0, \x_1\}^{\times}$. The shuffle product, extended linearly, makes $R\langle \x_0, \x_1\rangle$ into a
commutative unitary ring. The deconcatenation coproduct is defined to be  the linear map
\begin{eqnarray}
\Delta : R\langle \x_0, \x_1\rangle &  \To & R \langle \x_0, \x_1 \rangle \otimes_R R \langle \x_0, \x_1 \rangle \nonumber \\
\Delta (w)  &=  &\sum_{uv =w} u \otimes v \nonumber 
\end{eqnarray}
and the antipode is the linear map defined by $w \mapsto (-1)^{|w|} \widetilde{w}$, 
where $|w|\in \N$ denotes the length of a word $w$ which 
defines a grading on $R\langle \x_0, \x_1 \rangle$.  With these definitions, $R \langle \x_0, \x_1\rangle$ is a commutative, graded Hopf algebra over $R$.

The dual of $R\langle \x_0, \x_1 \rangle$ is the $R$-module of non-commutative formal power series
$$
R\langle \langle \x_0, \x_1 \rangle \rangle = \{ S= \sum_{w \in \{ \x_0, \x_1\}^{\times}} S_w w\quad , \quad  S_w \in R \}
$$
equipped with the concatenation product.  It is the completion of $R\langle \x_0, \x_1 \rangle$ 
with respect to the augmentation ideal $\ker \varepsilon$, where $\varepsilon:  R\langle \x_0, \x_1 \rangle \rightarrow R$ is the map which projects onto the empty word.
Then $R\langle \langle \x_0, \x_1 \rangle \rangle$ is a  complete Hopf algebra with respect to  the (completed) coproduct 
$$
\Gamma : R\langle \langle \x_0, \x_1\rangle \rangle  \To  R \langle \langle \x_0, \x_1 \rangle \rangle \widehat{\otimes}_R R \langle \langle \x_0, \x_1 \rangle\rangle \nonumber \\
$$
for which the elements $\x_0, \x_1$ are primitive: $\Gamma( \x_i) = 1\otimes \x_i + \x_i \otimes 1$ for $i=0,1$.  The antipode is as before. Thus $R \langle \langle \x_0, \x_1 \rangle \rangle$ is cocommutative but not commutative.

By duality, a series $S \in R \langle \langle \x_0, \x_1 \rangle \rangle$ defines an element $S \in \mathrm{Hom}_{R-mod}(R\langle \x_0, \x_1 \rangle, R)$ as follows: 
to any word $w$ associate the coefficient $S_w$ of $w$ in $S$.

An invertible  series  $S \in R\langle \langle \x_0, \x_1 \rangle \rangle^{\times}$ (i.e., with invertible leading term $S_1$) is group-like if 
$\Gamma (S) = S \otimes S$.  Equivalently, the coefficients $S_w$ of $S$   define a homomorphism for the shuffle product:
$S_{w \sha w'} = S_w S_{w'}$ for all $w, w' \in \{\x_0, \x_1 \}^{\times}$, where $S_{\bullet}$ is extended by linearity on the left-hand side.
By the formula for the antipode, it follows that for such a series $S=S(\x_0,\x_1)$, its inverse is given by
\begin{equation} \label{Sinversion}
S( \x_0, \x_1)^{-1} = \widetilde{S} (-\x_0, -\x_1)\ .
\end{equation}

\subsection{Multiple polylogarithms in one variable}
Recall that the generating series of multiple polylogarithms  on $\Pro^1\backslash \{0, 1, \infty\}$  is denoted by 
$$
L(z) = \sum_{w \in \{ \x_0, \x_1\}^{\times} } L_w(z)w \ .
$$
It is the unique solution to the  Knizhnik-Zamolodchikov equation \cite{K-Z}
\begin{equation} \label{KZ}
{d \over dz}L (z) =  L(z) \Big( {\x_0 \over z} + { \x_1 \over z-1} \Big) \ ,
\end{equation}
which satisfies the asymptotic condition 
\begin{equation} \label{Lat0}
L(z) =  \exp( \x_0 \log(z))\,  h_0(z)
\end{equation}
for all  $z$ in the neighbourhood of the origin, where
$h_0(z)$  is a function  taking values in $\C\langle\langle \x_0, \x_1\rangle \rangle$ which is holomorphic at $0$ and  satisfies $h(0)=1$.
Note that we use the opposite convention to \cite{BrSVP}  in this paper: differentiation of $L_w(z)$ corresponds to deconcatenation of $w$ on the right. 
The series $L(z)$ is a group-like formal power series.  In particular, the polylogarithms $L_w(z)$ satisfy the shuffle product formula
\begin{equation} \label{Lshuff}
L_{w \sha w'}(z) = L_w(z ) L_{w'}(z) \hbox{ for all } w,w' \in \{\x_0, \x_1\}^{\times}\ .
\end{equation}
We  have
\begin{equation}\label{classicalpolyasL}
- L_{\x_1 \x_0^{n-1}}(z) = \Li_n(z) = \sum_{k \geq 1} {z^k \over k^n} \ ,
\end{equation}
for all $n\geq 1$, which expresses the classical polylogarithms as coefficients of $L(z)$.
 
Denote the generating series of (shuffle-regularized) multiple zeta values, or Drinfeld's associator, by  
$$\ZZ(\x_0, \x_1)= \sum_{w\in \{\x_0, \x_1\}^{\times}}   \zeta( w)\, w \in \C \langle \langle \x_0, \x_1\rangle \rangle\ .$$
It is the regularized limit of $L(z)$ at the point $z=1$. In other words, there exists a function $h_1(z)$ taking values in series $ \C\langle \langle \x_0, \x_1 \rangle \rangle$, which is holomorphic at $z=1$ where it takes the value $h(1)=1$,   such that
\begin{equation}\label{Lat1}
L(z) =  \ZZ(\x_0, \x_1)  \,  \exp(\x_1 \log(1-z)) h_1(z) \ . 
\end{equation}
The series $\ZZ(\x_0, \x_1)$ is group-like, so in particular we have
\begin{equation} \label{Zinversion}
\ZZ(\x_0, \x_1)^{-1} = \widetilde{\ZZ} (-\x_0, -\x_1) \ .
\end{equation}
When no confusion arises, we denote $\ZZ(\x_0, \x_1)$ simply by $\ZZ$. Its coefficients are 
(shuffle-regularized) iterated integrals
$$\zeta(\x_{i_1} \ldots \x_{i_n} ) = \int_{0}^1 \omega_{i_1}  \ldots  \omega_{i_n}  \hbox{ for all } i_1, \ldots, i_n \in \{0, 1\}\ ,$$  
where the differential forms are integrated starting from the left,  $\omega_0 = {dz \over z}$ and $\omega_1 = {dz \over z-1}$.

For $i \in \{0, 1\}$, let $\Mo_i$ denote analytic continuation around a path winding once around the point $i$ in the positive direction.  The operators $\Mo_i$ act on the series 
$L(z)$ and $L ( \overline{z})$, commute with multiplication, and  commute with $\partial \over \partial z$ and $\partial \over \partial \overline{z}$.

\begin{lem} \label{lemmonodromy} \cite{L-D}. The monodromy operators 
$\Mo_{0} , \Mo_1$ act as follows:
\begin{eqnarray}
\Mo_{0} L(z) & = &e^{2 \pi i \x_0} \,  L(z)\ ,   \\
\Mo_{1} L(z) & =  & \ZZ e^{2 \pi i
\x_{1}} \ZZ^{-1}   L(z)\,   \ .\nonumber 
\end{eqnarray}
\end{lem}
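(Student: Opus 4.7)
The plan is to derive each monodromy formula from the local asymptotic expansions of $L(z)$ at the relevant singular points, which are precisely the defining data recalled in (\ref{Lat0}) and (\ref{Lat1}).

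For the action of $\Mo_0$, I would start from the asymptotic expansion $L(z) = \exp(\x_0 \log z)\, h_0(z)$, where $h_0$ is a series-valued holomorphic function near $0$ with $h_0(0)=1$. Since the monodromy operator $\Mo_0$ is analytic continuation around $0$ and $h_0$ is single-valued near $0$, the only effect of $\Mo_0$ is to replace $\log z$ by $\log z + 2\pi i$. Because $\x_0$ commutes with itself, $\exp(\x_0(\log z + 2\pi i)) = e^{2\pi i \x_0}\exp(\x_0 \log z)$, so $\Mo_0 L(z) = e^{2\pi i \x_0} L(z)$. One should justify that $h_0$ is genuinely single-valued by observing that the KZ equation (\ref{KZ}) has regular singularities and that $h_0$ is characterised as the unique series-valued holomorphic solution of a regular ODE obtained by substituting the ansatz $L(z)=\exp(\x_0\log z)\,h_0(z)$ into (\ref{KZ}); uniqueness and holomorphy of $h_0$ are then standard for unipotent regular-singular connections.

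For $\Mo_1$, I would use the second expansion $L(z) = \ZZ\,\exp(\x_1 \log(1-z))\, h_1(z)$, where $h_1$ is holomorphic at $z=1$ with $h_1(1)=1$; this is literally the definition (\ref{Lat1}) of $\ZZ$. As $z$ traces a small positively oriented loop around $1$, the variable $1-z$ also traces a positively oriented loop around $0$ (write $z = 1+\epsilon e^{i\theta}$, so $1-z = \epsilon e^{i(\theta+\pi)}$), hence $\log(1-z)\mapsto \log(1-z)+2\pi i$, and again $h_1$, being holomorphic at $1$, is single-valued. Since $\ZZ$ is a constant series, it passes through $\Mo_1$, and one computes
$$
\Mo_1 L(z) \;=\; \ZZ\, e^{2\pi i \x_1}\exp(\x_1 \log(1-z))\,h_1(z) \;=\; \ZZ\, e^{2\pi i \x_1}\, \ZZ^{-1}\, L(z),
$$
where in the last step I multiply and divide by $\ZZ$ and reuse (\ref{Lat1}). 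The inverse $\ZZ^{-1}$ exists because $\ZZ$ is group-like with leading term $1$ (and is given explicitly by (\ref{Zinversion})).

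The main conceptual step—and the only place where anything needs more than a direct manipulation—is verifying that the two local expansions (\ref{Lat0}) and (\ref{Lat1}) really hold with single-valued holomorphic $h_0, h_1$. This reduces to the general fact that a unipotent connection with regular singularities on $\Pro^1 \setminus \{0,1,\infty\}$ admits canonical local solutions of this form, and in the present case the $h_i(z)$ are explicitly the holomorphic series obtained by direct substitution into the KZ equation and solving order by order in the weight filtration. Once that is in hand, both identities of the lemma follow immediately from the two observations above, and the consistency of the two formulae (e.g.\ that they agree with the shuffle-regularized definition of $\zeta(w)$) is automatic from the construction of $\ZZ$ as the regularized limit of $L(z)$ at $1$.
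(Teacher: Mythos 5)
Your proof is correct and follows essentially the same route as the paper: the monodromy at $0$ is read off from the expansion $(\ref{Lat0})$ via $\log z \mapsto \log z + 2\pi i$, and the monodromy at $1$ from $(\ref{Lat1})$ by noting that $h_1$ is single-valued and $\log(1-z)\mapsto \log(1-z)+2\pi i$, then reassembling $L(z)$ using $\ZZ^{-1}$. The extra remarks you make about justifying the single-valuedness of $h_0,h_1$ are a reasonable elaboration of what the paper takes as given in its statement of $(\ref{Lat0})$ and $(\ref{Lat1})$.
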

\begin{proof}
The formula for the monodromy at the origin follows immediately from $(\ref{Lat0})$ and the equation $\Mo_0 \log (z) = \log z + 2i \pi $. From $(\ref{Lat1})$ we obtain
\begin{multline}
\Mo_1 L(z)=   \Mo_1 \big( \ZZ    \exp(\x_1 \log(1-z)) h_1(z)\big) \\
=  \ZZ  \,  \exp(2 i \pi \x_1 )  \exp(\x_1 \log(1-z)) h_1(z) =\ZZ  \,  \exp(2 i \pi \x_1 ) \ZZ^{-1} L(z)\ . 
\end{multline}
\end{proof}

\subsection{Hoffman multiple zeta values} \label{sectHoffMZV}
We need to consider a certain family of multiple zeta values similar to those first considered by Hoffman \cite{Hoffman}.

If $n_1,\ldots, n_r\geq 1$, define the following shuffle-regularized multiple zeta value:
$$\zeta_k(n_1,\ldots, n_r) = (-1)^r\zeta(\x_0^{k}\x_1\x_0^{n_1-1}  \ldots  \x_1 \x_0^{n_r-1})\ .$$
In the non-singular case $k=0, n_r \geq 2$, it reduces to the multiple zeta value 
$$\zeta(n_1,\ldots, n_r) = \sum_{0< k_1 < k_2< \ldots < k_r} {1 \over k_1^{n_1} \ldots k_r^{n_r}} \in \R\ ,$$
and we shall drop the subscript $k$ whenever it is equal to $0$. 
Henceforth, let $2^{\{n\}}$ denote a sequence $2,\ldots, 2$ of $n$ two's. Certain families of multiple zeta values will repeatedly play a role in the sequel.
The first family  corresponds to alternating words of type $(\x_1 \x_0)^n$ and reduce to even  powers of $\pi$:
\begin{equation}\label{zeta2saspi}
\zeta(2^{\{n\}}) = {\pi^{2n} \over (2n+1)!} \ .
\end{equation}

The following identity, for words of type $\x_0 (\x_1 \x_0)^n$,  is corollary 3.9 in \cite{BrMTZ}, and is easily proved using standard relations between multiple zeta values:
\begin{equation} \label{zeta12s}
 \zeta_1(2^{\{n\}}) = 2 \sum_{i=1}^{n}  (-1)^i \zeta(2i+1) \zeta(2^{\{n-i\}})\ .
\end{equation}
Next, define for any $a,b,r\in \N$, 
\begin{equation}\label{notationAB} 
A^r_{a} =  \binom{2r}{2a+2} \  \hbox{ and  } \ B^r_{b} =\bigl(1-2^{-2r}\bigr)\binom{2r}{2b+1}\ . 
\end{equation}
The following theorem is due to Zagier \cite{Zagier}, recently reproved in \cite{Li}.
\begin{thm} \label{thmZagierthm}   Let $a,b\geq 0$. Then 
\begin{equation} \label{ZagierFormula}
\zeta(2^{\{a\}}3 2^{\{b\}})=  2\,\sum_{r=1}^{a+b+1}(-1)^r (A^r_{a}-B^r_{b})\, \zeta(2r+1)\, \zeta( 2^{\{a+b+1-r\}})\,\;. 
\end{equation}
\end{thm}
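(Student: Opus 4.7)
The plan is to verify (\ref{ZagierFormula}) by matching both sides inside a common bivariate generating function. Define
$$\mathcal{G}(x,y) \;=\; \sum_{a,b\geq 0} \zeta(2^{\{a\}} 3\, 2^{\{b\}}) \, x^{2a+2} y^{2b+1},$$
and let $\mathcal{H}(x,y)$ denote the series built analogously from the right-hand side of (\ref{ZagierFormula}). It suffices to prove $\mathcal{G}=\mathcal{H}$ as formal power series and then extract the coefficient of $x^{2a+2}y^{2b+1}$.

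To compute $\mathcal{G}$ I use the nested-sum representation of MZVs: isolating the special summation index $m$ which carries the exponent $3$, and recognising the Euler product identity $\sum_{n\geq 0}\zeta(2^{\{n\}}) z^{2n} = \prod_{k\geq 1}(1+z^2/k^2) = \sinh(\pi z)/(\pi z)$ (equivalent to (\ref{zeta2saspi})), one obtains
$$x^{-2}y^{-1}\mathcal{G}(x,y) \;=\; \frac{\sinh(\pi y)}{\pi y}\sum_{m\geq 1} \frac{1}{m^3}\,\frac{\prod_{k=1}^{m-1}(1+x^2/k^2)}{\prod_{k=1}^m (1+y^2/k^2)}.$$
Rewriting the finite products as ratios of $\Gamma$-functions recasts the sum as a hypergeometric-type series in $m$ at unit argument. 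A hypergeometric reduction, combined with the reflection formula $\Gamma(z)\Gamma(1-z)=\pi/\sin(\pi z)$, collapses it to an elementary closed form in $\sin(\pi x), \sin(\pi y), \cos(\pi x), \cos(\pi y)$ and rational functions of $x,y$.

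To compute $\mathcal{H}$, I use (\ref{zeta2saspi}) again together with the elementary binomial identities
$$\sum_{a\geq 0}\binom{2r}{2a+2}x^{2a+2} \;=\; \tfrac12\big((1+x)^{2r}+(1-x)^{2r}\big)-1, \quad \sum_{b\geq 0}\binom{2r}{2b+1}y^{2b+1} \;=\; \tfrac12\big((1+y)^{2r}-(1-y)^{2r}\big).$$
The $(1-2^{-2r})$ twist in $B^r_b$ splits $\mathcal{H}$ into a ``full'' and a ``half-argument'' piece featuring $((1\pm y)/2)^{2r}$. The odd-zeta generating series $\sum_{r\geq 1} \zeta(2r+1) u^{2r}$ is expressible via $\log$-derivatives of $\Gamma$, and after summing over $r$ the resulting expressions for $\mathcal{H}$ reassemble into exactly the same closed form as $\mathcal{G}$.

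The main obstacle is the hypergeometric evaluation of $\mathcal{G}$: the isolated ``$3$'' produces a genuine entanglement of the $x$- and $y$-variables so that $\mathcal{G}$ does not factor as a product of independent $x$- and $y$-pieces, and the reduction must carefully account for the poles at $m=k$. A secondary subtlety is that apparent divergences involving shuffle-regularised values such as $\zeta_1(2^{\{\cdot\}})$ of (\ref{zeta12s}) cancel exactly because the combination $A^r_a-B^r_b$ suppresses the offending contributions; the signs $(-1)^r$ on the right-hand side emerge from the reflection formula at the critical arguments. A final coefficient extraction from $\mathcal{G}=\mathcal{H}$ completes the proof.
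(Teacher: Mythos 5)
This theorem is not proved in the paper at all: it is imported wholesale from Zagier \cite{Zagier} (see also \cite{Li}), so the relevant comparison is with Zagier's published argument, whose overall architecture your sketch does reproduce: a bivariate generating function for the left-hand side computed from the nested-sum representation, giving (correctly) the factor $\sinh(\pi y)/(\pi y)$ times the entangled sum $\sum_{m\geq 1} m^{-3}\prod_{k<m}(1+x^2/k^2)\big/\prod_{k\leq m}(1+y^2/k^2)$, to be matched against the generating function of the right-hand side.

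The genuine gap is the sentence ``a hypergeometric reduction \ldots collapses it to an elementary closed form in $\sin$, $\cos$ and rational functions of $x,y$.'' First, no such elementary closed form can be correct: the coefficients of the right-hand side of (\ref{ZagierFormula}) involve the odd zeta values $\zeta(2r+1)$, which are not (known to be) polynomials in $\pi$, so the closed form must involve the digamma function $\psi$ at arguments such as $1\pm\tfrac{x\pm y}{2}$ and $1\pm y$ (this is exactly how $\zeta(2r+1)$ enters, via the Taylor expansion of $\psi(1+u)$); your own treatment of $\mathcal{H}$ via ``$\log$-derivatives of $\Gamma$'' is inconsistent with an elementary trigonometric answer for $\mathcal{G}$. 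Second, and more importantly, there is no standard hypergeometric summation theorem that evaluates this series: the $m^{-3}$ factor makes it a non-terminating series at unit argument outside the classical Gauss/Saalsch\"utz/Dixon families. In Zagier's proof this evaluation is the entire content of the hard step, and it is established not by hypergeometric reduction but by an analytic uniqueness argument: both sides are shown to be entire in $x$ of controlled (exponential) growth, they are shown to agree on the arithmetic progressions $x\equiv\pm y\pmod{2\Z}$ where the series degenerates and can be summed, and a Carlson-type interpolation theorem then forces equality. Without supplying that argument (or an alternative such as Li's), your proof does not go through. A minor further point: the remark about cancellation of the regularised values $\zeta_1(2^{\{\cdot\}})$ is out of place here, since $\zeta(2^{\{a\}}3\,2^{\{b\}})$ is an honestly convergent multiple zeta value and no regularisation enters the statement being proved.
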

We denote the corresponding generating series by:
\begin{eqnarray} \label{Zfamily}
\Zp   & = & \sum_{n\geq 0}  (-1)^n \zeta(2^{\{n\}}) \, (\x_1 \x_0)^n \ , \\
\Zo   & = &   \sum_{n\geq 1}  (-1)^n \zeta_1(2^{\{n\}}) \, \x_0 (\x_1\x_0)^n \ , \nonumber  \\
\ZH   & = &  \sum_{m,n\geq 0} (-1)^{m+n+1} \zeta(2^{\{m\}} 3 2^{\{n\}}) \, (\x_1 \x_0)^{m+1} \x_0 (\x_1 \x_0)^n \ ,\nonumber \\
\Zs   & = &   \sum_{m,n\geq 0} (-1)^{m+n+1} \zeta_1(2^{\{m\}} 3 2^{\{n\}}) \, \x_0 (\x_1 \x_0)^{m+1} \x_0 (\x_1 \x_0)^n \ . \nonumber 
\end{eqnarray}
The coefficients of $\Zp$ are even powers of $\pi$ by  $(\ref{zeta2saspi})$, and the coefficients of 
$\Zo$ and $\ZH$ are products of odd zeta values with even powers of $\pi$ by   $(\ref{zeta12s})$ and $(\ref{ZagierFormula})$.
However, the values of the `singular' Hoffman elements
$\zeta_1(2^{\{m\}}3 2^{\{n\}})$  are not known. Luckily, 
these numbers will drop out of our proofs.

\begin{rem}It turns out that the Galois coaction 
on the corresponding motivic multiple zeta values $\zetam_1(2^{\{m\}}3 2^{\{n\}})$  can be computed explicitly using the motivic version of theorem \ref{thmZagierthm} given in (\cite{BrMTZ},
theorem 4.3), and that they have `motivic depth' at most  two.  It follows  from the method of \cite{BrDecomp}  that the numbers $\zeta_1( 2^{\{m\}}3 2^{\{n\}} )$ are completely determined
up to an unknown rational multiple of $\pi^{2m+2n+4}$. 
\end{rem}

\subsection{Duality relations} \label{sectDuality}
The automorphism $z \mapsto 1-z$ of $\Pro^1 \backslash \{0,1,\infty\}$ interchanges the two forms $\omega_0 = {dz \over z}$ and  $\omega_1 = {dz \over z-1}$, and reverses 
the canonical path from $0$ to $1$. The following well-known  `duality relation'
\begin{equation} \label{MZVduality}
\zeta( \x_{i_1} \ldots \x_{i_n})= (-1)^n\zeta( \x_{1-i_n} \ldots \x_{1-i_1}) \qquad \hbox{for all } i_1,\ldots, i_n \in \{0,1\} 
\end{equation}
follows from their interpretation as  iterated integrals. Some analogous series to $(\ref{Zfamily})$ obtained by summing over sets of  words in $\B^1$, will appear in \S\ref{sectnotV1}. By
$(\ref{MZVduality})$ their coefficients can be expressed in terms of the multiple zeta values considered above.

\section{Proof of theorem \ref{theoremSV1}}\label{SV1}

\subsection{The coalgebra  of  1-Hoffman words}

\begin{defn}
Let $I_H \subset \C\langle\langle \x_0, \x_1 \rangle\rangle$ denote the (complete)
ideal generated by 
$$ w_1 \x_1^2 w_2  \ , \quad w_1 \x_0^3 w_2 \ , \quad w_1 \x_0^2 w_2 \x_0^2 w_3$$
for all $w_1,w_2, w_3 \in \C\langle\langle \x_0, \x_1 \rangle \rangle$.
\end{defn} 
Likewise, let $H\subset \C\langle \x_0, \x_1 \rangle$ denote the subspace spanned by the set  $\B^0$ of 
words  $w$ which contain no word  $\x_1\x_1$, no word $\x_0\x_0 \x_0$  and at most a single subsequence
$\x_0\x_0$. It  has an increasing filtration $\Fo$  given by  the number of
subsequences $\x_0\x_0$ (called the `level' filtration in \cite{BrMTZ}) which
satisfies $\Fo_{-1} H =0$ and $F_1 H= H.$ Thus 
$\Fo_0 H$ is the complex vector space spanned by the empty word and  
\emph{alternating words} of the form 
\begin{equation}\label{altwords}
w = \ldots \x_1 \x_0 \x_1 \x_0 \ldots \ ,
\end{equation}
(with any initial and final letter) and $\gr^\Fo_1 H$ is isomorphic to the vector space spanned by \emph{1-Hoffman
words}  of the form
\begin{equation}\label{1Hwords}
w = \ldots  \x_1 \x_0 \x_0 \x_1 \x_0  \ldots \ ,
\end{equation}
where the letters denoted by three dots are alternating (again with any initial
and final letters).
Clearly $H$  is stable under the deconcatenation coproduct:
$$\Delta: H \rightarrow H \otimes_{\C} H\ ,$$
and the filtration is compatible with deconcatenation: $\Delta F_i H \subset
\bigoplus_{j+k = i } F_j H \otimes_{\C} F_k H$, where $i,j,k \in \{-1,0,1\}$. The
coalgebra $H$ is  dual to $\C\langle \langle \x_0, \x_1 \rangle \rangle/I_H$.

\begin{defn} Let $T \subset  \C\langle\langle \x_0, \x_1 \rangle\rangle$ denote any
non-commutative formal power series $T= \sum_{w \in \{\x_0, \x_1\}^{\times}} T_w w$.
For all $i,j \in \{0,1\}$, let $T_{i,j}$ denote  the series
\begin{equation}  \label{dec1}
T_{i,j} = T_{\x_i} \x_i \, \delta_{ij} + \sum_{ w \in \x_i \{\x_0, \x_1\}^{\times}\x_j } T_w w\ ,
\end{equation}
where the sum is  over  words beginning in $\x_i$ and ending in $\x_j$.  Thus 
$$
T= T_{1}\cdot 1 + T_{0,0} + T_{1,0} +T_{0,1} +T_{1,1}\ ,
$$
where $T_1\in\C$.
Likewise, for $k=0$ or $k=1$, let 
\begin{equation} \label{dec2}
T^k = \sum_{w \in \B_k^0} T_w w\ ,
\end{equation}
where $\B_0^0\subset \B^0$ is the set of words $(\ref{altwords})$, and $\B_1^0 \subset \B^0$ is the set of words $(\ref{1Hwords})$.
Combining $(\ref{dec1})$ and $(\ref{dec2})$ gives rise to eight series $T^{k}_{i,j}$
for all $i,j,k\in \{0,1\}$.
\end{defn}

For any series $T \subset  \C\langle\langle \x_0, \x_1 \rangle\rangle$, we have 
\begin{equation}
T \equiv   T_{1} \cdot 1 +  \sum_{0\leq i,j,k \leq 1} T^{i}_{j,k}   \pmod {I_H} \ .
\end{equation}
Let $A, B \subset \C \langle \langle \x_0, \x_1 \rangle \rangle$ be any two series.
It follows from the definition of $I_H$   that
\begin{equation} \label{ABequations}
A^{1}_{*,*}B^{1}_{*,*} \equiv A^{1}_{*,0}B^{0}_{0,*} \equiv A^{0}_{*,0}B^{1}_{0,*}\ ,
\equiv A^{*}_{*,1}B^{*}_{1,*} \equiv 0 \pmod{I_H}
\end{equation}
where a $*$ denotes any index equal to $0$ or $1$. We will often use the fact that  
\begin{equation}
T \equiv 0 \pmod{I_H} \quad  \Longleftrightarrow \quad  T_1 =0 \hbox{ and }
T^i_{j,k} =0  \quad \hbox{  for all  }  \quad i,j,k \leq 1\ .  
\end{equation}
  
The following series plays an important role. 
\begin{defn} \label{defV} Let $V =   \ZZ \x_1 \ZZ^{-1} \in \C\langle \langle \x_0, \x_1 \rangle
\rangle$ and $V_-=V(-x_0,-x_1)$.
\end{defn} 
Observe that 
$\widetilde{V} = -V_{-}$ by $(\ref{Sinversion})$.
\subsection{Solutions to $(\ref{SV1ODE})$ and their monodromy equations}
We wish to construct functions $F_w(z)$ satisfying the conditions of theorem
\ref{theoremSV1}. For this, define a generating series
$F(z) = \sum_{w \in \{\x_0, \x_1\}^{\times}} F_w(z) w$
by the ansatz 
\begin{equation} \label{FAnsatz}
F(z) = \widetilde{L}(\overline{z}) \Stot L(z)\ ,
\end{equation} 
where $\Stot \subset \C \langle \langle \x_0, \x_1 \rangle \rangle$ is a constant
series which is yet to be determined.  
It follows immediately from $(\ref{FAnsatz})$ and  equation  $(\ref{KZ})$ that 
$${\partial^2 \over \partial z\partial \overline{z} } F_{\x_i w \x_j}(z)  = 
{F_w(z) \over (\overline{z} - i)(z-j)}  $$ 
for all $i, j \in \{0,1\}$, and all words $w \in \{\x_0, \x_1\}^{\times}$.   Note
that  it is \emph{not} possible to choose $\Stot$ in such a way that $(\ref{FAnsatz})$ is single-valued in general.
However, we are only interested in the coefficients $F_w(z)$ of $F$   for words $w$ which 
satisfy the conditions of theorem \ref{theoremSV1}, i.e., those words which are 
basis elements of the coalgebra $H$.  This gives rise to a weaker set of
conditions on the series $\Stot$ modulo the ideal $I_H$, which do admit a solution.

\begin{prop}  \label{prop0}  The functions $F_w(z)$  defined by $(\ref{FAnsatz})$
are single-valued and satisfy $F_w(z) = F_{\widetilde{w}}(\overline{z})$ for every
word $w\in H$  if and only if  the series $\Stot$ satisfies
\begin{eqnarray} 
&(i)&  [\Stot, x_0] \equiv 0 \pmod{I_H} \ , \nonumber \\
&(ii)&  V_- \Stot + \Stot V \equiv 0 \pmod{I_H} \ , \nonumber \\ 
&(iii)&  \overline{\Stot} \equiv  \widetilde{\Stot}   \pmod{I_H}\ . \nonumber  
\end{eqnarray}
Equation $(i)$ implies that $\Stot^*_{1,1}= \Solo=\Sool=0.$ 
\end{prop}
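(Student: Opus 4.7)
The plan is to translate the analytic conditions on $F_w$ for $w \in H$ into algebraic conditions on $\Stot$ modulo $I_H$, exploiting that $H$ is a sub-coalgebra under deconcatenation: every prefix and suffix of a word in $\B^0$ again lies in $\B^0$, so the sum defining $F_w$ for $w \in \B^0$ only sees coefficients $\Stot_u$ with $u \in \B^0$.

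I first compute the two monodromies of $F(z) = \widetilde{L}(\overline z)\,\Stot\,L(z)$. Using Lemma \ref{lemmonodromy} and the fact that word reversal is an anti-homomorphism (together with $\widetilde V = -V_-$ and the convention that $\overline z$ traverses a small loop about each puncture in the opposite orientation to $z$), one obtains
$$
\Mo_0 F = \widetilde{L}(\overline z)\, e^{-2\pi i \x_0}\, \Stot\, e^{2\pi i \x_0}\, L(z), \qquad
\Mo_1 F = \widetilde{L}(\overline z)\, e^{2\pi i V_-}\, \Stot\, e^{2\pi i V}\, L(z).
$$
For any series $X$, the coefficient of $w \in \B^0$ in $\widetilde{L}(\overline z)\, X\, L(z)$ is $\sum_{w = u_1 u_2 u_3} L_{\widetilde u_1}(\overline z)\, X_{u_2}\, L_{u_3}(z)$, involving only $u_i \in \B^0$, and hence depends only on $X$ modulo $I_H$. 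Conversely, by the linear independence (as multi-valued real-analytic functions) of the products $L_{\widetilde u_1}(\overline z)\, L_{u_3}(z)$ for $u_1, u_3 \in \B^0$ --- which can be verified by asymptotic analysis at $z = 0$ --- the vanishing of every such coefficient forces $X \equiv 0 \pmod{I_H}$. Consequently, single-valuedness of all $F_w$, $w \in H$, is equivalent to the two exponentiated identities $e^{-2\pi i \x_0}\Stot e^{2\pi i \x_0} \equiv \Stot$ and $e^{2\pi i V_-}\Stot e^{2\pi i V} \equiv \Stot$ modulo $I_H$.

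These exponentiated identities reduce to the linear conditions (i) and (ii). For the ``if'' direction I use that $I_H$ is a two-sided ideal: $[\Stot, \x_0] \in I_H$ implies $\operatorname{ad}(\x_0)^k(\Stot) \in I_H$ for all $k \geq 1$ by induction, so the full Hadamard-type expansion of $e^{-2\pi i \x_0}\Stot e^{2\pi i \x_0} - \Stot$ lies in $I_H$; likewise $V_- \Stot + \Stot V \in I_H$ propagates via the Leibniz identity
$$
V_-^{k+1}\Stot + \cdots + \Stot V^{k+1} = V_-\bigl(V_-^{k}\Stot + \cdots + \Stot V^k\bigr) + \bigl(V_-^{k}\Stot + \cdots + \Stot V^k\bigr)V
$$
to the analogous statement for (ii). The ``only if'' direction follows by matching low-degree word coefficients, in conjunction with the symmetry (iii) and the vanishings established below. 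The symmetry condition $F_w(z) = F_{\widetilde w}(\overline z)$ is obtained by re-indexing the decompositions of $\widetilde w$ by reversal, giving $F_{\widetilde w}(\overline z) = \sum_{w = u_1 u_2 u_3} L_{\widetilde u_1}(\overline z)\, \Stot_{\widetilde u_2}\, L_{u_3}(z)$, so comparison of coefficients yields $\Stot_u = \Stot_{\widetilde u}$ for $u \in \B^0$; this is equivalent to (iii) since the coefficients of $\Stot$ turn out to be real.

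To deduce $\Stot^*_{1,1} = \Solo = \Sool = 0$ from (i), I evaluate $[\Stot, \x_0]_w$ at strategically chosen $w \in \B^0$: the word $w = (\x_1\x_0)^k \x_0$ lies in $\B^0$ (one $\x_0\x_0$ at the end) and forces $\Stot_{(\x_1\x_0)^k} = 0$, yielding $\Solo = 0$; dually $w = \x_0(\x_0\x_1)^k$ gives $\Sool = 0$; the alternating part of $\Stot^*_{1,1}$ comes from $w = \x_1(\x_0\x_1)^n \x_0$, and the $1$-Hoffman part from appending a trailing $\x_0$ to each $1$-Hoffman word starting and ending in $\x_1$. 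The main obstacle I foresee is the ``only if'' direction of the reduction from the exponentiated monodromy conditions to the linear commutator conditions (i) and (ii): since $\x_0$ and $V$ are not nilpotent modulo $I_H$ and there is no formal parameter to extract order by order, the argument must proceed word by word, using both the reversal symmetry (iii) and the vanishings above to pin down $\Stot$ modulo $I_H$.
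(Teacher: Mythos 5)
Your setup coincides with the paper's: reduce single-valuedness over $H$ to the two monodromy identities $e^{-2\pi i\x_0}\Stot e^{2\pi i\x_0}\equiv\Stot$ and $e^{2\pi i V_-}\Stot e^{2\pi i V}\equiv \Stot \pmod{I_H}$, and your ``if'' direction, your treatment of (iii), and your derivation of the vanishings $\Stot^*_{1,1}=\Solo=\Sool=0$ from (i) are all sound. The genuine gap is the ``only if'' direction, which you leave to ``matching low-degree word coefficients'' and yourself flag as the main obstacle. Moreover, your diagnosis of why it is hard rests on a false premise: you assert that $\x_0$ and $V$ are not nilpotent modulo $I_H$, but they are. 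By definition $I_H$ contains all $w_1\x_0^3w_2$ and $w_1\x_1^2w_2$, so $\x_0^3\equiv 0$ and $V^2=\ZZ\x_1^2\ZZ^{-1}\equiv 0 \pmod{I_H}$ (likewise $V_-^2\equiv 0$, since $I_H$ is stable under reversal). This nilpotence is exactly what closes the gap.

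Concretely, for the monodromy at $1$ the exponentials truncate to $(1+2\pi i V_-)\Stot(1+2\pi i V)\equiv\Stot$, i.e.\ $2\pi i(V_-\Stot+\Stot V)+(2\pi i)^2V_-\Stot V\equiv 0$; multiplying this relation on the right by $V$ and using $V^2\equiv 0$ gives $V_-\Stot V\equiv 0$, so the quadratic term drops and the relation is \emph{equivalent} to (ii). For the monodromy at $0$ one must work slightly harder because $\x_0^2\not\equiv 0$: first extract from $\Stot_{1,*}\,e^{2\pi i\x_0}\equiv\Stot_{1,*}$ the relation $\Stot_{1,*}\,\x_0\equiv 0$ (writing $e^{2\pi i\x_0}-1=\x_0T$ with $T$ invertible), which yields $\Solo=\Soll=\Slll=0$ and, by symmetry, $\Sool=0$; with $\Stot$ thus reduced to the shape $S_1\cdot 1+\Sooo+\Sllo+\Slol+\Sloo$, the relations $(\ref{ABequations})$ give $\x_0^2\Stot\equiv\Stot\x_0^2\equiv\x_0\Stot\x_0\equiv S_1\x_0^2$, so the quadratic terms in the expansion of $e^{-2\pi i\x_0}\Stot e^{2\pi i\x_0}-\Stot$ cancel and only the commutator $[\Stot,\x_0]$ survives. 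Without these two truncation arguments your proof of the forward implication is incomplete.
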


\begin{proof} Since the ideal $I_H$ is the annihilator of the coalgebra $H$, it 
 is enough to find conditions on $\Stot$ so that the following equations hold
\begin{eqnarray} \label{monodconditions}
\Mo_0 F(z) & \equiv & F(z) \pmod{I_H} \ ,\\
\Mo_1 F(z) & \equiv  &F(z) \pmod{I_H} \ . \nonumber 
\end{eqnarray}
For the monodromy at $0$,  lemma $(\ref{lemmonodromy})$  yields 
\begin{equation}\label{eSe}
e^{-2 i \pi \x_0 }\Stot e^{2 i \pi \x_0 } \equiv \Stot \pmod{I_H}\ .
\end{equation}
In particular, $\Stot_{1,*}   e^{2 i \pi \x_0 } \equiv \Stot_{1,*}  \pmod{I_H}$. 
There is an invertible series 
$T\in \C\langle \langle \x_0, \x_1 \rangle \rangle$ such that
$e^{2 i \pi \x_0} -1 = \x_0 T$, so we deduce that $(\ref{eSe})$ implies that
$$ \Stot_{1, *} \, \x_0 \equiv 0  \pmod{I_H}\ , $$
which implies that  
$(\Stot \x_0 )^0_{1,0} + (\Stot \x_0)^1_{1,0} = 0$. 
Removing the final letter $\x_0$ yields the equations 
$\Solo=\Soll = \Slll  = 0.$ By symmetry, we also have $\Sool=0$. Thus the only
surviving terms in $\Stot$ are of the form
\begin{equation} \label{Ssurviving}
S= S_1\cdot 1 + \Sooo + \Sllo + \Slol + \Sloo\ ,
\end{equation}
and  so $\x_0^2 \Stot \equiv  \Stot \x_0^2 \equiv \x_0 \Stot \x_0  \equiv S_1 \x_0^2
\pmod{I_H}$ by equations $(\ref{ABequations})$. Expanding out  
equation $(\ref{eSe})$, and using the fact that $\x_0^n \in I_H$ for $n\geq 3$, we
deduce that 
$$ \x_0 \Stot \equiv \Stot \x_0 \pmod{I_H}\ .$$
Conversely, this equation clearly implies $(\ref{eSe})$, so they are equivalent.

Now consider the monodromy at $1$. Lemma \ref{lemmonodromy} and 
$(\ref{monodconditions})$  yield 
the equation
$$ \widetilde{\overline{W}} \Stot W \equiv  \Stot \pmod{I_H} \ ,  $$
where $W= \ZZ e^{2 i \pi \x_1} \ZZ^{-1}$. 
Since $\x_1^2 \in I_H$, we have $W\equiv 1 + 2 i \pi V \pmod{I_H}$,  by definition \ref{defV} and the previous
equation is equivalent to 
$$ 2 i \pi ( - \widetilde{V} \Stot + \Stot V) - (2i \pi)^2  \widetilde{V} \Stot V \equiv 0 
\pmod{I_H} \ ,  $$
Since $V^2 \equiv \ZZ^2 \x_1^2 \ZZ^{-2} \equiv 0 \pmod{I_H}$, multiplying the
previous expression  on the right by $V$ yields
$\widetilde{V} SV\equiv 0 \pmod{I_H}$, and it is equivalent to the identity 
$$  - \widetilde{V} \Stot + \Stot V \equiv 0  \pmod{I_H} \ ,  $$
which gives {\it (ii)} by the equation $\widetilde{V} = - V_{-}$ after definition \ref{defV}. 

Finally, the equivalence of $(iii)$ with the equation
$F_{\widetilde{w}}(\overline{z}) = F_w(z)$ is obvious. 
\end{proof}

We can reduce equation $(ii)$ of the previous proposition further.  

\begin{lem} \label{lem0equations} If $\Stot^*_{1,1}=\Solo=\Sool=0$ and $\Stot$ is real,
the equation $V_- \Stot + \Stot V\equiv 0 \pmod{I_H}$ is equivalent to three sets of
equations:
\begin{equation}\label{firstset0}
2\vool + \Sooo \voll \equiv 0
\end{equation}
involving only the alternating part $\Sooo$, an equation involving $\Slol,\Sllo$: 
\begin{equation} \label{secondset0}
2 \vlll - \voll \Slol +\Sllo \voll  \equiv   0 
\end{equation}
and a final set of equations involving $\Sloo$ also:
\begin{eqnarray} \label{thirdset0}
\{\vool ,\Slol\}  + \Sooo ( \vool+\vlll)  & \equiv & -\Sloo \voll \ , \\
V^{\mathrm{sing}}+  [ \Sooo, \vooo]  -\vooo \Sllo +\Slol \vooo  & \equiv & -\vool \Sloo - \Sloo \volo \ , \nonumber 
\end{eqnarray}
where $V^{\mathrm{sing}} = 2\vloo-\vlol \Sooo+\Sooo \vllo$,  $\{x,y\}=xy+yx$ and $[x,y]=xy-yx$.
\end{lem}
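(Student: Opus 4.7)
The plan is a direct expansion. Modulo $I_H$ every series in $\C\langle\langle\x_0,\x_1\rangle\rangle$ decomposes into its scalar part plus the eight pieces labelled by type $(k,i,j)$, where $k\in\{0,1\}$ distinguishes alternating vs.\ 1-Hoffman words and $i,j\in\{0,1\}$ record the start and end letters. The hypotheses of the lemma have already reduced $\Stot$ to the five pieces $1,\Sooo,\Sllo,\Slol,\Sloo$, so the congruence $V_-\Stot+\Stot V\equiv 0\pmod{I_H}$ splits into eight scalar equations, which I claim collapse to precisely the three displayed families.

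For the preparatory step, the identity $\widetilde V=-V_-$ gives on coefficients $V_{\widetilde w}=(-1)^{|w|+1}V_w$, and hence $V_-^{(k,i,j)}=(-1)^{|w|}V^{(k,i,j)}$, where $|w|$ is the common length parity of words of the given type. A minimal-word check shows that this parity is odd for the four types $(0,0,0),(0,1,1),(1,0,1),(1,1,0)$ and even for the remaining four; so, for example, $V_-^{(0,0,1)}=\vool$, $V_-^{(1,0,0)}=\vloo$, $V_-^{(1,1,1)}=\vlll$, whereas $V_-^{(1,0,1)}=-\vlol$ and $V_-^{(0,0,0)}=-\vooo$.

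With this in hand, the main step is to apply the annihilation rules $(\ref{ABequations})$ to each of the forty candidate products $\Stot^{(\bullet)}V^{(\bullet)}$: each such product either vanishes modulo $I_H$ (whenever a forbidden $\x_1\x_1$, $\x_0\x_0\x_0$, or a second $\x_0\x_0$ arises at the junction or in the interior), or it lands in a uniquely determined type. Projecting the resulting expansion onto four representative components gives the four equations of the lemma: the $(0,0,1)$-component yields $2\vool+\Sooo\voll\equiv 0$, namely (\ref{firstset0}); the $(1,1,1)$-component yields $2\vlll+\Sllo\voll-\voll\Slol\equiv 0$, namely (\ref{secondset0}); the $(1,0,1)$-component yields $\{\vool,\Slol\}+\Sooo(\vool+\vlll)+\Sloo\voll\equiv 0$, the first line of (\ref{thirdset0}); and the $(1,0,0)$-component yields $V^{\mathrm{sing}}+[\Sooo,\vooo]-\vooo\Sllo+\Slol\vooo+\vool\Sloo+\Sloo\volo\equiv 0$, the second line of (\ref{thirdset0}), after grouping the alternating contributions as $V^{\mathrm{sing}}=2\vloo-\vlol\Sooo+\Sooo\vllo$. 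The four remaining components $(0,1,0),(1,1,0),(0,0,0),(0,1,1)$ are either trivially zero (the case $(0,1,1)$) or reduce to reverses of the above; using that $\widetilde\Sooo=\Sooo$, $\widetilde\Sloo=\Sloo$ and $\widetilde\Sllo=\Slol$, these reverses are automatic consequences of the four displayed equations, so the system indeed collapses to exactly the three families stated.

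The main obstacle is purely bookkeeping: one has to go through all forty candidate products, determine which survive modulo $I_H$ and in which type they land, and track the parity-dependent signs from the preparatory step in each component, in order to recover the displayed equations with exactly the stated coefficients. Everything else is formal expansion and rearrangement.
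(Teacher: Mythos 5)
Your proposal follows essentially the same route as the paper's proof: decompose $V_-\Stot+\Stot V$ modulo $I_H$ into its eight components indexed by level and by initial/final letter, kill products via $(\ref{ABequations})$, and observe that one component vanishes while the rest collapse onto the displayed equations. Your identification of the four components producing $(\ref{firstset0})$, $(\ref{secondset0})$ and the two lines of $(\ref{thirdset0})$, and your parity bookkeeping for $V_-$, are correct.

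There is, however, one genuine (if small) gap in the backward direction of the equivalence. The $(0,0,0)$-component does not ``reduce to a reverse of the above'': it reads
\begin{equation*}
\vool \Sooo + \Sooo \volo \equiv 0 \pmod{I_H}\ ,
\end{equation*}
and under letter reversal (using $\widetilde{\Sooo}=\Sooo$, $\widetilde{\vool}=-\volo$) this equation is sent to minus itself. It is therefore self-reverse and is \emph{not} the reversal of any of the four displayed equations, so the reversal symmetry alone does not dispose of it. To close the argument one must additionally note that it is a consequence of $(\ref{firstset0})$ together with its reverse $2\volo\equiv\voll\Sooo$: multiplying $2\vool\equiv-\Sooo\voll$ on the right by $\Sooo$ and $2\volo\equiv\voll\Sooo$ on the left by $\Sooo$ and adding gives exactly $2(\vool\Sooo+\Sooo\volo)\equiv 0$. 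This is precisely the extra step the paper supplies (``the last equation is an immediate consequence of the first one and equation $(\ref{firstset0})$''); with it, your proof is complete.
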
 

\begin{proof} Decompose the four equations $ (V_- \Stot + \Stot V)_{i,j}\equiv 0$, for
$i,j \in \{0,1\}$ into their parts
of odd and even weights.  After killing terms using $(\ref{ABequations})$, this
gives eight equations, one of which vanishes,
and the remaining seven are exactly the equations listed above together with the
three equations
\begin{eqnarray*}
2\volo - \voll \Sooo   & \equiv & 0\ ,\\
\{\volo , \Sllo\}  +  ( \volo+\vlll) \Sooo & \equiv & \voll \Sloo\ ,\\
\vool \Sooo + \Sooo \volo & \equiv & 0\ .
\end{eqnarray*}
The first two of these equations follow from (\ref{firstset0}) and the first equation
in (\ref{thirdset0}), respectively,  upon reflection using $\widetilde{V}=-V_-$ and $\widetilde{\Stot}=\Stot$.
The last equation is an immediate consequence of the first one and equation\ (\ref{firstset0}).
\end{proof}

In the sequel we show that our formula for $\Sooo$ given by
equation\ (\ref{Sdef}) is compatible with 
$(\ref{firstset0})$.  The non-trivial part is to check that our explicit expression for $\Sllo$ and
$\Slol$ indeed gives a solution to   $(\ref{secondset0})$.
Finally, admitting $(\ref{secondset0})$, the first equation of $(\ref{thirdset0})$ 
defines $\Sloo$,
and it is a simple matter to verify the second equation of $(\ref{thirdset0})$.
 
\subsection{Decomposition of $V$} \label{sectnotV} It follows from
$(\ref{ABequations})$ and the inversion relation $(\ref{Zinversion})$ that 
\begin{multline}  \label{Vasprod}
V \equiv ( 1 + \Zooo + \Zolo + \Zloo + \Zllo ) \, \x_1 \\ ( 1-\Zooo + \Zool + \Zloo
- \Zlol  )  \pmod{I_H}\ .
\end{multline} 
With the notations from (\ref{Zfamily}), 
we find that 
$$\Zo   =  \Zooo,\quad\Zp   =  1+  \Zolo,\quad\ZH    =  \Zllo,\quad\Zs    =  \Zloo$$
and therefore by decomposition  $(\ref{Vasprod})$ and
$\widetilde{\Zo}=\Zo$
\begin{eqnarray}\label{voeqs}
\vooo = - \Zo \x_1 \Zo\ , &&  \vool =  \Zo \x_1 \widetilde{\Zp} \ ,  \\
\volo = - \Zp \x_1 \Zo\ , &&  \voll =  \Zp \x_1 \widetilde{\Zp} \nonumber
\end{eqnarray}
for the alternating words, and 
$$\begin{array}{lll }
\vloo =  \Zo \x_1 \widetilde{\Zs}  - \Zs \x_1 \Zo\ , &&  \vlol = - \Zo \x_1
\widetilde{\ZH} + \Zs \x_1 \widetilde{\Zp}\ ,\\
\vllo = \Zp \x_1 \widetilde{\Zs} - \ZH \x_1 \Zo\ , &&  \vlll = - \Zp \x_1
\widetilde{\ZH} + \ZH \x_1 \widetilde{\Zp}\ .
\end{array}
$$

We now proceed with the verification of the equations of lemma \ref{lem0equations}.
\subsection{Alternating words}
The first task is to separate the elements $V_{*,*}^0$ into a pure odd zeta part $\Sooo$, and 
a pure `powers of $\pi$' part $\voll$. 
\begin{lem}\label{lemZo} We have
\begin{eqnarray} \label{ZoasS}
2 \Zo =  -\Sooo \Zp=   -\widetilde{\Zp} \Sooo\ .
\end{eqnarray}
\end{lem}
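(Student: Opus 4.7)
\medskip

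\noindent\textbf{Proof proposal.} The plan is to prove both equalities by direct series manipulation, using only the explicit formulas for $\Sooo$, $\Zo$, and $\Zp$ given in $(\ref{Sdef})$ and $(\ref{Zfamily})$, together with the single non-trivial ingredient, namely Zagier's reduction $(\ref{zeta12s})$
$$\zeta_1(2^{\{n\}}) \;=\; 2 \sum_{i=1}^{n} (-1)^i \zeta(2i+1)\, \zeta(2^{\{n-i\}}).$$

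First I would verify the equality $2\Zo = -\Sooo\, \Zp$. The crucial combinatorial observation is the concatenation identity $(\x_0 \x_1)^n \x_0 \cdot (\x_1 \x_0)^m = \x_0 (\x_1 \x_0)^{n+m}$, so that multiplying the defining series for $\Sooo$ and $\Zp$ produces a series supported on words of the form $\x_0(\x_1\x_0)^k$. Collecting the coefficient of $\x_0(\x_1\x_0)^k$ on the right gives
$$4 \sum_{n=1}^{k} (-1)^{k-n} \zeta(2n+1)\, \zeta(2^{\{k-n\}}),$$
while on the left the coefficient of $\x_0(\x_1\x_0)^k$ in $2\Zo$ is $2(-1)^k \zeta_1(2^{\{k\}})$. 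These agree term-by-term after applying $(\ref{zeta12s})$ and noting that $(-1)^{k+n} = (-1)^{k-n}$.

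For the second equality $-\Sooo\,\Zp = -\widetilde{\Zp}\,\Sooo$, I would observe that since $\Zp$ is supported on the palindromic words $(\x_1\x_0)^n$, one has $\widetilde{\Zp} = \sum_{n\geq 0}(-1)^n \zeta(2^{\{n\}}) (\x_0\x_1)^n$. The key combinatorial identity now is
$$(\x_0\x_1)^m \cdot (\x_0\x_1)^n \x_0 = (\x_0\x_1)^{m+n}\x_0 = \x_0(\x_1\x_0)^{m+n},$$
which shows that the product $\widetilde{\Zp}\,\Sooo$ is again supported on the same family of words $\x_0(\x_1\x_0)^k$, and the reindexing $k=m+n$ yields precisely the same coefficients as $\Sooo\,\Zp$.

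There is no real obstacle: the assertion is essentially a repackaging of $(\ref{zeta12s})$ into generating-series form, and the only thing to watch is bookkeeping of signs and the fact that the alternating words $\x_0(\x_1\x_0)^k$ are the common support of all three products. A conceptually cleaner alternative would be to deduce the second equality from the first by applying the word-reversal antiautomorphism and using $\widetilde{\Sooo} = \Sooo$ and $\widetilde{\Zo} = \Zo$; this avoids recomputing the product but still requires the initial coefficient matching.
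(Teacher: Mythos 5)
Your proof is correct and follows essentially the same route as the paper: expand $-\Sooo\Zp$ over the words $(\x_0\x_1)^{m+n}\x_0$ and match coefficients with $2\Zo$ via $(\ref{zeta12s})$, then obtain the second equality (the paper does this by word reversal, using that $\widetilde{\Sooo}=\Sooo$ and $\widetilde{\Zo}=\Zo$, which is your stated alternative; your direct recomputation of $\widetilde{\Zp}\Sooo$ is equally fine). One terminological slip: the words $(\x_1\x_0)^n$ are not palindromic --- their reversal is $(\x_0\x_1)^n$ --- but your displayed formula for $\widetilde{\Zp}$ is the correct one, so nothing is affected.
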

\begin{proof}
By the definition $(\ref{Zfamily})$ of $\Zp$  and (\ref{Sdef}) we have
\begin{eqnarray*}
- \Sooo \Zp&=& 4\sum_{m\geq1,n\geq0}(-1)^n\zeta(2m+1)\zeta(2^{\{n\}})(\x_0\x_1)^m\x_0(\x_1\x_0)^n\\
&=&4\sum_{n=0}^\infty\sum_{m=1}^n(-1)^{n-m}\zeta(2m+1)\zeta(2^{\{n-m\}})(\x_0\x_1)^n\x_0\ .
\end{eqnarray*}
The first equation in the lemma follows immediately by  $(\ref{zeta12s})$. The second equation follows from the first
by reversing the order of the words.
\end{proof}

\begin{cor}  All four series $V_{*,*}^0$ can be reduced to the single series $\voll$:
\begin{eqnarray} \label{Vfourtoone}
4 \vooo  & =  & - \Sooo \voll \Sooo \ ,\\
2 \vool & = & - \Sooo \voll\ , \nonumber \\
2 \volo & = &   \voll \Sooo\ .\nonumber
\end{eqnarray}
In particular,  equation\ (\ref{firstset0}) holds.

\end{cor}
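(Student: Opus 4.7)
The plan is to derive all four identities by direct substitution of Lemma \ref{lemZo} into the explicit formulas \eqref{voeqs}. Lemma \ref{lemZo} provides two equivalent rewritings of $2\Zo$, namely $2\Zo = -\Sooo\, \Zp$ and $2\Zo = -\widetilde{\Zp}\, \Sooo$; since each $V^0_{i,j}$ contains $\Zo$ as a left factor, a right factor, or both, each identity reduces to a single one-line computation.

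For the middle identity, I multiply $\vool = \Zo\, \x_1\, \widetilde{\Zp}$ by $2$ and use $2\Zo = -\Sooo\, \Zp$ on the left factor to get
\[
2\vool \;=\; (2\Zo)\, \x_1\, \widetilde{\Zp} \;=\; -\Sooo\, \Zp\, \x_1\, \widetilde{\Zp} \;=\; -\Sooo\, \voll\, .
\]
For the third identity, I apply the second form $2\Zo = -\widetilde{\Zp}\, \Sooo$ to the right factor of $\volo = -\Zp\, \x_1\, \Zo$:
\[
2\volo \;=\; -\Zp\, \x_1\, (2\Zo) \;=\; \Zp\, \x_1\, \widetilde{\Zp}\, \Sooo \;=\; \voll\, \Sooo\, .
\]
For the first identity, I use both forms simultaneously on the two factors of $4\vooo = -4\Zo\, \x_1\, \Zo$:
\[
4\vooo \;=\; -(2\Zo)\, \x_1\, (2\Zo) \;=\; -\Sooo\, \Zp\, \x_1\, \widetilde{\Zp}\, \Sooo \;=\; -\Sooo\, \voll\, \Sooo\, .
\]
Finally, equation \eqref{firstset0} is an immediate rearrangement of the second identity.

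There is no real obstacle: all the content is packaged into Lemma \ref{lemZo}, which itself is the translation of Zagier-type identity \eqref{zeta12s} for $\zeta_1(2^{\{n\}})$ into generating-series form. Once that lemma is in hand, the corollary is purely formal manipulation of non-commutative series.
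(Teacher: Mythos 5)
Your proof is correct and follows exactly the route the paper intends: the paper's own proof is the one-liner ``immediate consequence of the formulae for $V^0$ in the decomposition of $V$ and lemma \ref{lemZo}'', and you have simply written out the three substitutions explicitly, applying the appropriate form of $2\Zo = -\Sooo\Zp = -\widetilde{\Zp}\Sooo$ to whichever side of $\x_1$ the factor $\Zo$ sits. The computations check out, and your observation that (\ref{firstset0}) is a rearrangement of the second identity is also the intended reading.
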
 
\begin{proof} Immediate consequence of the formulae for $V^0$  in \S\ref{sectnotV} and lemma \ref{lemZo}. 
\end{proof}

\subsection{Singular Hoffman part}
The next task is to gather all terms involving the singular Hoffman series $\Zs$,
which fortunately drops out of the final calculation.
\begin{lem} \label{lem38}  The following identity holds:
\begin{equation}\label{sing}
2V^{\mathrm{sing}} =  \Sooo \vlll \Sooo\ .
\end{equation}
\end{lem}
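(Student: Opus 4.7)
The plan is a direct computation: substitute the explicit decompositions of $V^1_{0,0}, V^1_{0,1}, V^1_{1,0}, V^1_{1,1}$ (from the end of \S\ref{sectnotV}) into the definition
\[
V^{\mathrm{sing}} = 2\vloo - \vlol\,\Sooo + \Sooo\,\vllo
\]
and then apply Lemma~\ref{lemZo} in the form $\widetilde{\Zp}\Sooo = \Sooo\Zp = -2\Zo$ to simplify every occurrence of $\Zp$ or $\widetilde{\Zp}$ adjacent to $\Sooo$. The same identities will be applied to the right-hand side $\Sooo\,\vlll\,\Sooo$.

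Expanding $V^{\mathrm{sing}}$ yields six terms modulo $I_H$:
\[
V^{\mathrm{sing}} \equiv 2\Zo\x_1\widetilde{\Zs} - 2\Zs\x_1\Zo + \Zo\x_1\widetilde{\ZH}\Sooo - \Zs\x_1\widetilde{\Zp}\Sooo + \Sooo\Zp\x_1\widetilde{\Zs} - \Sooo\ZH\x_1\Zo.
\]
The crucial observation, and the reason the lemma is stated the way it is, is that the four terms containing the unknown singular Hoffman series $\Zs$ cancel in pairs after applying Lemma~\ref{lemZo}: the substitution $-\Zs\x_1\widetilde{\Zp}\Sooo = 2\Zs\x_1\Zo$ kills $-2\Zs\x_1\Zo$, and $\Sooo\Zp\x_1\widetilde{\Zs} = -2\Zo\x_1\widetilde{\Zs}$ kills $2\Zo\x_1\widetilde{\Zs}$. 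What remains is the clean expression
\[
V^{\mathrm{sing}} \equiv \Zo\x_1\widetilde{\ZH}\Sooo - \Sooo\ZH\x_1\Zo \pmod{I_H}.
\]

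For the right-hand side, substituting the formula $\vlll = -\Zp\x_1\widetilde{\ZH} + \ZH\x_1\widetilde{\Zp}$ and applying Lemma~\ref{lemZo} on the outer $\Sooo$'s gives
\[
\Sooo\,\vlll\,\Sooo \equiv -(\Sooo\Zp)\x_1\widetilde{\ZH}\Sooo + \Sooo\ZH\x_1(\widetilde{\Zp}\Sooo) = 2\Zo\x_1\widetilde{\ZH}\Sooo - 2\Sooo\ZH\x_1\Zo,
\]
which equals $2V^{\mathrm{sing}}$ by the expression obtained above.

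There is no real obstacle: the argument is mechanical bookkeeping, and the only conceptual content is the cancellation of the four $\Zs$-terms, which is the whole point of grouping $V^{\mathrm{sing}}$ in the form $2\vloo - \vlol\Sooo + \Sooo\vllo$. The role of this lemma in the larger proof is precisely to isolate the numerically known part of the Hoffman series (the $\ZH$ piece, whose coefficients are given by Zagier's theorem) from the unknown singular piece $\Zs$, so that subsequent verifications of the equations of Lemma~\ref{lem0equations} can proceed without any appeal to values of $\zeta_1(2^{\{m\}}3\,2^{\{n\}})$.
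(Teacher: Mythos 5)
Your proposal is correct and follows essentially the same route as the paper: expand $V^{\mathrm{sing}}$ and $\vlll$ via the decompositions of $V^1_{*,*}$ in \S\ref{sectnotV} and apply lemma \ref{lemZo} in the form $2\Zo=-\Sooo\Zp=-\widetilde{\Zp}\Sooo$. The paper merely organises the same cancellation differently, regrouping $2V^{\mathrm{sing}}$ as $(2\Zo+\Sooo\Zp)\x_1(2\widetilde{\Zs}+\widetilde{\ZH}\Sooo)-(2\Zs+\Sooo\ZH)\x_1(2\Zo+\widetilde{\Zp}\Sooo)+\Sooo\vlll\Sooo$ so that the $\Zs$-terms disappear because whole factors vanish, whereas you cancel them pairwise and then match the two sides.
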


\begin{proof} Rewrite the elements $V_{*,*}^1$ using the formulae in \S\ref{sectnotV}. The
left-hand side gives
$$4 ( \Zo \x_1 \widetilde{\Zs}  -   \Zs \x_1 \Zo) - 2 ( \Zs \x_1
\widetilde{\Zp}  - \Zo \x_1 \widetilde{\ZH}  ) \Sooo
+  2\,  \Sooo(  \Zp \x_1 \widetilde{\Zs} - \ZH \x_1 \Zo ) $$
which is equal to
$$
(2 \Zo  + \Sooo  \Zp)\x_1(2\widetilde{\Zs} +\widetilde{\ZH}  \Sooo) - (2\Zs +
\Sooo   \ZH )\x_1(2\Zo +\widetilde{\Zp} \Sooo)+\Sooo\vlll \Sooo.
$$
By equation $(\ref{ZoasS})$ the result follows.
\end{proof}

\subsection{Hoffman part}
The main part of the calculation is the following separation of  $\vlll$ into
pure odd zeta and pure even zeta  parts.
 
\begin{lem} The following identity holds
\begin{equation}  \label{vlllid}
2 \, \vlll =  \voll \Slol - \Sllo \voll\ .
\end{equation}
\end{lem}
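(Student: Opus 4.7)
The plan is to recast the identity in terms of the more tractable series $\Zp$, $\widetilde{\Zp}$, $\ZH$, $\widetilde{\ZH}$ from Section~\ref{sectHoffMZV}. Substituting $\voll = \Zp\x_1\widetilde{\Zp}$ and $\vlll = \ZH\x_1\widetilde{\Zp} - \Zp\x_1\widetilde{\ZH}$ from~(\ref{voeqs}) and grouping terms gives
\[
\voll\Slol - \Sllo\voll - 2\vlll \;=\; \Zp\,\x_1\,(\widetilde{\Zp}\Slol + 2\widetilde{\ZH}) \;-\; (\Sllo\Zp + 2\ZH)\,\x_1\,\widetilde{\Zp}\ .
\]
Setting $R := \Sllo\Zp + 2\ZH$, reversal yields $\widetilde{R} = \widetilde{\Zp}\Slol + 2\widetilde{\ZH}$, so the lemma reduces to the single identity $R\,\x_1\,\widetilde{\Zp} = \Zp\,\x_1\,\widetilde{R}$.

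The heart of the proof is the factorization
\[
R \;=\; -4\,\Zp\,U\ ,\qquad U \;:=\; \sum_{\substack{s\geq 1\\ N\geq 0}} B^{s+N}_N\,\zeta(2s+2N+1)\,(\x_1\x_0)^s\x_0(\x_1\x_0)^N\ .
\]
To verify it, compute the coefficient of $(\x_1\x_0)^m\x_0(\x_1\x_0)^N$ in $\Sllo\Zp$ and substitute Zagier's Theorem~\ref{thmZagierthm} into the coefficient of the same word in $2\ZH$. The sum arising from $\Sllo\Zp$ is naturally supported on $r\in\{m,\ldots,m+N\}$, while $A^r_{m-1}=\binom{2r}{2m}$ vanishes identically for $r\le m-1$; hence the $A^r_{m-1}$-contributions from the two terms agree and cancel, leaving only the $B^r_N$-part of Zagier's formula. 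Reindexing by $j = m-s$ then produces the $\Zp U$ factorization above. Applying reversal yields the companion $\widetilde{R} = -4\,\widetilde{U}\,\widetilde{\Zp}$.

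Substituting both factorizations into $R\,\x_1\,\widetilde{\Zp} = \Zp\,\x_1\,\widetilde{R}$ and cancelling the invertible series $\Zp$ from the left and $\widetilde{\Zp}$ from the right, the statement collapses to $U\,\x_1 = \x_1\,\widetilde{U}$. Reading off the coefficient of $(\x_1\x_0)^P\x_0(\x_1\x_0)^Q\x_1$ gives $B^{P+Q}_Q\,\zeta(2P+2Q+1)$ on the left and $B^{P+Q}_{P-1}\,\zeta(2P+2Q+1)$ on the right, so everything reduces to the elementary binomial symmetry
\[
\binom{2P+2Q}{2Q+1} \;=\; \binom{2P+2Q}{2P-1}\ .
\]

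The principal obstacle is establishing the left-factorization $R = -4\Zp U$. It is the only step using the full strength of Zagier's theorem, and it depends on a delicate balance: the binomial weights $\binom{2m+2n}{2m}$ in the definition of $\Sllo$ are chosen precisely so as to reproduce the $A^r_{m-1}$-part of Zagier's formula, which then cancels against $2\ZH$. This is the structural reason behind the specific coefficients in~(\ref{Sdef}), and it is what lets the remainder of the proof boil down to the one-line binomial identity above.
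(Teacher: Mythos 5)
Your proof is correct and takes essentially the same route as the paper: you form the same auxiliary series $R=\Sllo\Zp+2\ZH$ (the paper's $Y$), invoke Zagier's theorem so that the $A^r_{a-1}$-part cancels against $\Sllo\Zp$ (using that $\binom{2r}{2a}$ vanishes for $r<a$), and reduce the claim to a letter-reversal symmetry of the surviving $B$-part. The only difference is in the packaging of the last step: where the paper expands $Y\x_1\widetilde{\Zp}$ as a quadruple sum and reindexes to exhibit the symmetry, you factor $R=-4\,\Zp U$ and cancel the invertible series $\Zp$ and $\widetilde{\Zp}$ to land on the identity $U\x_1=\x_1\widetilde{U}$, i.e.\ the same binomial symmetry $\binom{2r}{2b+1}=\binom{2r}{2r-2b-1}$ --- a slightly cleaner finish, but not a different argument.
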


\begin{proof}
With the definition
$$ Y=2\ZH+\Sllo \Zp$$
we can rewrite equation\ (\ref{vlllid}) as
$$Y\x_1\widetilde{\Zp}=\Zp\x_1\widetilde{Y}\ .$$
With definition\ \ref{Sdef} and  notation $(\ref{notationAB})$, we have
\begin{eqnarray*}
\Sllo\Zp&=&-4\sum_{m\geq 1; n,k \geq 0 } A^{m+n}_{m-1} 
\zeta(2m+2n+1)(-1)^k\zeta(2^{\{k\}})\,(\x_1 \x_0)^{m}  \x_0   (\x_1 \x_0)^{n+k}\\
&=&-4\sum_{a\geq1,b\geq0}\sum_{r=a}^{a+b}(-1)^{a+b-r}A^r_{a-1}\zeta(2r+1)\zeta(2^{\{a+b-r\}})\,(\x_1
\x_0)^a  \x_0   (\x_1 \x_0)^b\ .
\end{eqnarray*}
Equation $(\ref{ZagierFormula})$  implies that in $2\ZH+\Sllo\Zp$ one binomial cancels
$$
Y=-4\sum_{a\geq1,b\geq0}\sum_{r=b+1}^{a+b}(-1)^{a+b-r}B^r_b\zeta(2r+1)\zeta(2^{\{a+b-r\}})\,(\x_1\x_0)^a  \x_0   (\x_1 \x_0)^b\ .
$$
Right multiplication by $\x_1\widetilde{\Zp}$ gives the following expression for  $-Y\x_1\widetilde{\Zp}/4$:
\begin{eqnarray*}
&&\hspace{-28pt}\sum_{a\geq1,b\geq0}\sum_{r=b+1}^{a+b}\sum_{s=0}^\infty(-1)^{a+b-r+s}B^r_b\zeta(2r+1)
\zeta(2^{\{a+b-r\}})\zeta(2^{\{s\}})\,(\x_1 \x_0)^a  \x_0   (\x_1 \x_0)^b\x_1   (\x_0\x_1)^s\\
=&&\hspace{-9pt}\sum_{\alpha ,\beta \geq1}\sum_{\gamma=0}^{\alpha-1}\sum_{\delta=0}^{\beta-1}(-1)^{\alpha+\beta-\gamma-\delta}
B^{\gamma+\delta+1}_\delta\zeta(2\gamma+2\delta+3)\zeta(2^{\{\alpha-\gamma-1\}})\zeta(2^{\{\beta-\delta-1\}})\\
&&\hspace{2cm}(\x_1 \x_0)^{\alpha}(\x_0 \x_1)^{\beta}\ ,
\end{eqnarray*}
by the change of variables $(a,b,r,s) = (\alpha , \delta, \gamma+ \delta+1, \beta-\delta-1)$. 
The last expression is evidently invariant under letter reversal which completes the proof.
\end{proof}
 
\subsection{Monodromy at zero} To prove the triviality of the monodromy at zero we need the following lemma.

\begin{lem}\label{lemxo} $[\Stot,\x_0]=0$.
\end{lem}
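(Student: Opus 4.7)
The plan is to establish $[\Stot,\x_0]\equiv 0\pmod{I_H}$, which is the content of this lemma in the context of proposition \ref{prop0}(i) (since strict equality fails: $\Sllo\x_0$ contains words beginning in $\x_1$, whereas $\x_0\Stot$ does not). I would treat each of the four summands in $\Stot = 1 + \Sooo + \Sllo + \Slol + \Sloo$ separately and show that their contributions to the commutator cancel modulo $I_H$. Throughout I use that $I_H$ kills any word containing $\x_0\x_0\x_0$, $\x_1\x_1$, or two disjoint subsequences $\x_0\x_0$.

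First I would dispose of $\Sloo$ on combinatorial grounds. Every word of $\Sloo = \tfrac12(\Sooo\Sooo + \Sooo\Sllo + \Slol\Sooo)$ is a concatenation product whose two factors each begin and end in $\x_0$, so a $\x_0\x_0$ already appears at the joining point; in the mixed terms an additional $\x_0\x_0$ is contributed by $\Sllo$ or $\Slol$. Prepending or appending a further $\x_0$ then yields either a $\x_0\x_0\x_0$ or a second disjoint $\x_0\x_0$, so both $\Sloo\x_0$ and $\x_0\Sloo$ lie in $I_H$. The same inspection applied to the words $(\x_1\x_0)^m\x_0(\x_1\x_0)^n$ of $\Sllo$ shows that $\Sllo\x_0\equiv 0\pmod{I_H}$ (the case $n=0$ yields $\x_0\x_0\x_0$, the case $n\geq 1$ a second $\x_0\x_0$), and by reversal $\x_0\Slol\equiv 0\pmod{I_H}$. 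The problem thus reduces to verifying $[\Sooo,\x_0]+\Slol\x_0-\x_0\Sllo \equiv 0\pmod{I_H}$.

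The next step is to express $\x_0\Sllo$ and $\Slol\x_0$ in a common form by means of the elementary identity $\x_0(\x_1\x_0)^m = (\x_0\x_1)^m\x_0$. Both then become sums over the 1-Hoffman words $W_{a,b} := (\x_0\x_1)^a\x_0(\x_0\x_1)^b\x_0$:
\begin{align*}
-\x_0\Sllo &\equiv 4\sum_{m\geq 1,\,n\geq 0}\binom{2m+2n}{2m}\zeta(2m+2n+1)\,W_{m,n},\\
\Slol\x_0 &\equiv -4\sum_{m\geq 0,\,n\geq 1}\binom{2m+2n}{2m}\zeta(2m+2n+1)\,W_{m,n},
\end{align*}
where the second line uses $\Slol=\widetilde{\Sllo}$, the swap $m\leftrightarrow n$, and the symmetry $\binom{2k}{2m}=\binom{2k}{2(k-m)}$. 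On the bulk $m,n\geq 1$ the two sums coincide term by term and cancel, leaving only the boundary slices $n=0$ and $m=0$, which together contribute $4\sum_{k\geq 1}\zeta(2k+1)(W_{k,0}-W_{0,k})$.

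It remains to compare this against $[\Sooo,\x_0]$, which unwinds directly from (\ref{Sdef}) as $-4\sum_{n\geq 1}\zeta(2n+1)(W_{n,0}-W_{0,n})$, so that the sum with the previous boundary contribution vanishes. I expect the only real technical hurdle to be step two, namely recognising that the two families $\x_0(\x_1\x_0)^m\x_0(\x_1\x_0)^n$ and $(\x_0\x_1)^n\x_0(\x_0\x_1)^m\x_0$ describe the \emph{same} 1-Hoffman word $W_{m,n}$, so that after the appropriate change of summation variable the bulk of the two sums cancels exactly and only the boundary terms remain to be matched against $[\Sooo,\x_0]$. Once this identification is made, the cancellation is a one-line verification using $\binom{2k}{0}=\binom{2k}{2k}=1$.
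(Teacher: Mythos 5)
Your proposal is correct and follows essentially the same route as the paper: after discarding the terms killed by $I_H$, both arguments reduce to the identity $\Sooo\x_0+\Slol\x_0=\x_0\Sooo+\x_0\Sllo$ and verify it by projecting onto the 1-Hoffman words $(\x_0\x_1)^a\x_0\x_0(\x_1\x_0)^b$ (your $W_{a,b}$), where the required coefficient identities are exactly the binomial symmetry $\binom{2a+2b}{2a}=\binom{2a+2b}{2b}$ for the bulk and $\binom{2a}{2a}=1$ for the boundary terms matching $[\Sooo,\x_0]$. Your bulk/boundary organisation is just a repackaging of the paper's case split $a,b>0$ versus $b=0$ (and $a=0$ by reflection).
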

\begin{proof}
From the  shape $(\ref{Ssurviving})$ of $\Stot$, we find that $[\Stot,\x_0]\equiv0\pmod {I_H}$ is equivalent to
\begin{equation}\label{xo}
\Sooo\x_0+\Slol\x_0=\x_0\Sooo+\x_0\Sllo\ .
\end{equation}
According to equation\ (\ref{Sdef}) we decompose
\begin{eqnarray}\label{Smn}
\Sooo&=&\sum_{n\geq1}\Sooo(n)\,(\x_0\x_1)^n\x_0\ ,\\
\Sllo&=&\sum_{m\geq1,n\geq0}\Sllo(m,n)\,(\x_1\x_0)^m\x_0(\x_1\x_0)^n\ ,\quad\hbox{and}\nonumber\\
\Slol&=&\sum_{m\geq0,n\geq1}\Sllo(n,m)\,(\x_0\x_1)^m\x_0(\x_0\x_1)^n\ .\nonumber
\end{eqnarray}
Projecting (\ref{xo}) onto words of the form
$(\x_0\x_1)^a\x_0\x_0(\x_1\x_0)^b$ leads to identities between the coefficients which must be verified.
The case $b=0$ leads to the identity $\Sooo(a)=\Sllo(a,0)$ for all $a\in \N$. For $a,b>0$ we obtain
$\Sllo(a,b)=\Sllo(b,a)$. Both equations hold trivially by $(\ref{Sdef})$.
The case $a=0$ holds by reflection symmetry.
\end{proof}

\subsection{Proof of single-valuedness} 
To prove property $(ii)$ of proposition \ref{prop0} we need to show that equation\ (\ref{thirdset0}) holds.
The proofs are straightforward applications of $(\ref{Vfourtoone})$, (\ref{sing}), and $(\ref{vlllid})$  to write all $V$'s in terms of 
$\voll$, and reduce to  the definition of $\Sloo$ in (\ref{Sdef}).
Property $(i)$ is lemma \ref{lemxo}, and property $(iii)$ is immediately obvious from the definition of
$\Stot$. This completes the proof of  theorem \ref{theoremSV1}.

\section{Proof of theorem \ref{theoremSV2}}

This section proves the analogue of theorem \ref{theoremSV1} where zeros and ones are interchanged.
It parallels to a large extend \S\ref{SV1}. 

\subsection{The coalgebra  of  dual 1-Hoffman words}

\begin{defn}
Let $I_{\hat{H}} \subset \C\langle\langle \x_0, \x_1 \rangle\rangle$ denote the (complete)
ideal generated by 
$$  w_1 \x_0^2 w_2 \ , \quad  w_1 \x_1^3 w_2  \ , \quad w_1 \x_1^2 w_2 \x_1^2 w_3 $$
for all $w_1,w_2, w_3 \in \C\langle\langle \x_0, \x_1 \rangle \rangle$.
\end{defn} 
Likewise, let $\hat{H}\subset \C\langle \x_0, \x_1 \rangle$ denote the subspace spanned by
words $w$ which contain no word  $\x_0\x_0$,  at most a single subsequence
$\x_1\x_1$, and no $\x_1 \x_1 \x_1$.
The filtration and the notation will be the same as in \S\ref{SV1}
except that we use hat variables for quantities that live in $\hat{H}$.

\subsection{Solutions to $(\ref{SV2ODE})$ and their monodromy equations}
We again construct functions $\hat{F}_w(z)$ by a generating series
\begin{equation} \label{F1Ansatz}
\hat{F}(z) = \widetilde{L}(\overline{z}) \hat{S} L(z)\ .
\end{equation} 
The analogue of proposition \ref{prop0} is

\begin{prop}  \label{prop1} Let $\lStot$ be real.
The functions $\hat{F}_w(z)$  defined by $(\ref{F1Ansatz})$
are single-valued and satisfy $\hat{F}_w(z) = \hat{F}_{\widetilde{w}}(\overline{z})$ for every
word $w$ in $\B^1$ 
if and only if the series $\lStot$ satisfies
\begin{eqnarray} 
&(i)&  [\lStot, x_0] \equiv 0 \pmod{I_{\hat{H}}}\ ,  \nonumber \\
&(ii)&  V_- \lStot + \lStot V \equiv 0 \pmod{I_{\hat{H}}}\ ,  \nonumber \\ 
&(iii)&  \lStot \equiv  \widetilde{\lStot}   \pmod{I_{\hat{H}}}\ . \nonumber  
\end{eqnarray}
\end{prop}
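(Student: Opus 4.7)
The proof mirrors Proposition~\ref{prop0}, with one essentially new ingredient. Because $I_{\hat{H}}$ is the annihilator of the coalgebra $\hat{H}$ spanned by $\B^1$, single-valuedness of $\hat{F}_w$ for every $w \in \B^1$ is equivalent to the two monodromy identities
$$
\Mo_0 \hat{F}(z) \equiv \hat{F}(z), \qquad \Mo_1 \hat{F}(z) \equiv \hat{F}(z) \pmod{I_{\hat{H}}}.
$$
Applying Lemma~\ref{lemmonodromy} to the ansatz (\ref{F1Ansatz}), and using $\widetilde{V} = -V_-$, these translate into
$$
e^{-2i\pi \x_0}\lStot e^{2i\pi\x_0} \equiv \lStot, \qquad \widetilde{W^{-1}}\,\lStot\, W \equiv \lStot \pmod{I_{\hat{H}}},
$$
where $W = \ZZ e^{2i\pi \x_1}\ZZ^{-1}$.

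For the monodromy at $0$, I use $\x_0^2 \in I_{\hat{H}}$ to truncate $e^{2i\pi \x_0} \equiv 1 + 2i\pi \x_0$; expanding and splitting into real and imaginary parts (valid since $\lStot$ and $I_{\hat{H}}$ are real) yields condition~(i) together with the auxiliary relation $\x_0\lStot\x_0 \equiv 0 \pmod{I_{\hat{H}}}$. The latter is automatic from the shape of $\lStot$: every nonconstant summand of $\lStot$ begins or ends with $\x_0$, so wrapping by $\x_0$ on both sides produces a forbidden $\x_0\x_0$ subsequence.

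For the monodromy at $1$ the new subtlety appears, and is the main technical step: since only $\x_1^3 \in I_{\hat{H}}$ (not $\x_1^2$), the series $W$ and $\widetilde{W^{-1}}$ must be expanded to second order,
$$
W \equiv 1 + 2i\pi V - 2\pi^2 V^2, \qquad \widetilde{W^{-1}} \equiv 1 + 2i\pi V_- - 2\pi^2 V_-^2 \pmod{I_{\hat{H}}}.
$$
Expanding $\widetilde{W^{-1}}\lStot W - \lStot$ and collecting powers of $i$ gives condition~(ii) $V_-\lStot + \lStot V \equiv 0$ from the imaginary part, and a real companion equation of the form $-4 V_-\lStot V - 2\lStot V^2 - 2 V_-^2\lStot \equiv 0 \pmod{I_{\hat{H}}}$. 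The key observation is that this companion is forced by (ii): multiplying (ii) on the left by $V_-$ and on the right by $V$ yields $V_-^2\lStot \equiv \lStot V^2 \equiv -V_-\lStot V \pmod{I_{\hat{H}}}$, after which the three $\pi^2$ terms cancel. The same device, together with $V^3, V_-^3 \equiv 0 \pmod{I_{\hat{H}}}$, absorbs the residual $\pi^3$ and $\pi^4$ pieces of the expansion, so (ii) alone suffices. Finally, condition~(iii) is equivalent to the symmetry $\hat{F}_w(z) = \hat{F}_{\widetilde{w}}(\overline{z})$ by direct letter-reversal in (\ref{F1Ansatz}); reality of $\lStot$ (the hypothesis of the proposition) absorbs the complex conjugation that was needed in the analogous step of Proposition~\ref{prop0}.
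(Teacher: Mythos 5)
Your argument follows the paper's proof almost step for step: the reduction to the two monodromy identities modulo $I_{\hat{H}}$, the second-order truncation $W\equiv 1+2i\pi V-2\pi^2V^2$ forced by the fact that only $\x_1^3$ (not $\x_1^2$) lies in $I_{\hat{H}}$, and the observation that the real companion equation $V_-^2\lStot+2V_-\lStot V+\lStot V^2\equiv 0$ is a consequence of (ii) are all exactly the paper's steps. There is, however, one gap in the ``only if'' direction at $z=1$. The imaginary part of $\widetilde{W^{-1}}\lStot W-\lStot$ is not $2\pi(V_-\lStot+\lStot V)$ alone but $2\pi(V_-\lStot+\lStot V)-4\pi^3(V_-\lStot V^2+V_-^2\lStot V)$; since $V$, $V_-$ and $\lStot$ are all real series, ``collecting powers of $i$'' cannot separate the $\pi$ and $\pi^3$ contributions, and your device for absorbing the residual pieces presupposes (ii), so it cannot be invoked before (ii) is established. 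The paper closes this by first multiplying the full monodromy relation on the right by $V^2$ (resp.\ on the left by $V_-^2$): because $V^3\equiv V_-^3\equiv 0\pmod{I_{\hat{H}}}$, only $2i\pi V_-\lStot V^2-2\pi^2V_-^2\lStot V^2$ survives, whose imaginary part gives $V_-\lStot V^2\equiv 0$, and likewise $V_-^2\lStot V\equiv 0$; only then does the imaginary part of the original relation yield (ii). With that one insertion your proof is complete, and the ``if'' direction --- the one actually used in the proof of the zig-zag conjecture --- is already correct as written.

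A minor remark on the monodromy at $0$: the auxiliary relation $\x_0\lStot\x_0\equiv 0$ should be deduced from (i) itself, via $\x_0\lStot\x_0\equiv\lStot\x_0^2\equiv 0$, rather than from the explicit shape of the series in Definition \ref{S1def}, since the proposition is stated for an arbitrary real series $\lStot$.
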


\begin{proof} Equation $(i)$ is an immediate consequence of
\begin{equation}\label{eSe1}
e^{-2 i \pi \x_0 }\Stot e^{2 i \pi \x_0 } \equiv \Stot \pmod{I_{\hat{H}}}\ .
\end{equation}
Considering the monodromy at $1$, lemma \ref{lemmonodromy} yields the equation 
$$
\widetilde{\overline{W}} \lStot W \equiv \lStot \pmod{I_{\hat{H}}} \ ,
$$
where $W\equiv1+2 i \pi V+{1\over 2}(2 i \pi)^2 V^2\pmod{I_{\hat{H}}}$ and
$\widetilde{\overline{W}}\equiv1+2 i \pi V_-+{1\over 2}(2 i \pi)^2 V_-^2\pmod{I_{\hat{H}}}$.
Multiplication on the right by $V^2$ and taking the imaginary part gives
$V_-\lStot V^2\equiv0\pmod{I_{\hat{H}}}$, since $V^3 \equiv 0 \pmod{I_{\hat{H}}}$.  Likewise $V_-^2\lStot V\equiv0\pmod{I_{\hat{H}}}$.
Expanding  and taking real and imaginary parts gives the two equations
\begin{eqnarray*}
V_-\lStot+\lStot V&\equiv&0\ ,\\
V_-(V_-\lStot+\lStot V)+(V_-\lStot+\lStot V)V&\equiv&0
\end{eqnarray*}
which are 
equivalent to $(ii)$.

The equivalence of $(iii)$ with the equation
$\hat{F}_{\widetilde{w}}(\overline{z}) = \hat{F}_w(z)$ is obvious. 
\end{proof}

We use the expansions (where the upper index counts the number of $\x_1^2$'s)
$$V=\sum_{a,b,c\in\{0,1\}}\hat{V}^a_{b,c}$$
in $\hat{H}$ to reduce equation $(ii)$ of the previous proposition further.  

\begin{lem} \label{lem1equations} If $\lSoll=\lSlll=\lSolo=\lSool=0$ and $\lStot$ is real,
the equation $V_- \lStot + \lStot V\equiv 0 \pmod{I_{\hat{H}}}$ is equivalent to the equations:
\begin{eqnarray}\label{firstset1}
 2\lvool + \lSooo \lvoll &\equiv& 0\ ,\\
 2\lvlll - \lvoll \lSlol +\lSllo \lvoll &\equiv& 0\ , \nonumber\\
\{\lvool ,\lSlol\}  + \lSooo\lvlll  & \equiv & -\lSloo \lvoll\ ,  \nonumber \\
 \hat{V}^{\mathrm{sing}}-\lvooo \lSllo +\lSlol \lvooo  & \equiv & -\lvool\lSloo - \lSloo \lvolo\ ,  \nonumber 
\end{eqnarray}
where $\hat{V}^{\mathrm{sing}} = 2\lvloo-\lvlol \lSooo+\lSooo \lvllo$.
\end{lem}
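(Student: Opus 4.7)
The strategy is to mimic the proof of Lemma~\ref{lem0equations} almost line by line, replacing the ideal $I_H$ by $I_{\hat{H}}$. First I would record the hatted analogue of~(\ref{ABequations}): for any two series $A, B \in \C\langle\langle \x_0, \x_1\rangle\rangle$,
$$A^*_{*,0} B^*_{0,*} \;\equiv\; A^1_{*,*} B^1_{*,*} \;\equiv\; A^0_{*,1} B^1_{1,*} \;\equiv\; A^1_{*,1} B^0_{1,*} \;\equiv\; 0 \pmod{I_{\hat{H}}},$$
where the upper index now counts the number of $\x_1\x_1$ substrings. These identities follow immediately from the definition of $I_{\hat{H}}$, which kills every word containing $\x_0\x_0$, $\x_1\x_1\x_1$, or two disjoint $\x_1\x_1$ substrings.

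Next I would decompose both $V$ and $\lStot$ into their eight components $T^k_{i,j}$, as in~(\ref{dec1})--(\ref{dec2}) but in the hatted setting. Under the hypotheses $\lSoll = \lSlll = \lSolo = \lSool = 0$, the only surviving pieces of $\lStot$ (besides its leading coefficient) are $\lSooo$, $\lSllo$, $\lSlol$, $\lSloo$, in direct analogy with~(\ref{Ssurviving}). Expanding $V_- \lStot + \lStot V \equiv 0 \pmod{I_{\hat{H}}}$ into its eight $(k,i,j)$-components and discarding every product annihilated by the rules above produces eight candidate equations.

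I would then exploit reflection symmetry. By $\widetilde{V} = -V_-$ and the hypothesis $\widetilde{\lStot} \equiv \lStot \pmod{I_{\hat{H}}}$ (which is available since $\lStot$ is real and satisfies condition $(iii)$ of Proposition~\ref{prop1}), the whole equation is invariant under word-reversal, so the $(k,i,j)$-component is paired with the $(k,j,i)$-component. This pairing collapses the eight equations to four independent ones, which are precisely those stated in the lemma. Three of the remaining candidates are redundant reflections of these four, and the eighth vanishes identically, playing the role of the trivial identity $\vool \Sooo + \Sooo \volo \equiv 0$ at the end of the proof of Lemma~\ref{lem0equations}.

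The main obstacle is purely bookkeeping: correctly enumerating which of the eight candidate equations survives after reduction modulo $I_{\hat{H}}$, which are matched by reflection, and which vanishes identically. The arithmetic of each individual component is routine once the rules for $I_{\hat{H}}$ have been established, so once the template from Lemma~\ref{lem0equations} is transcribed to the hatted setting the proof proceeds mechanically.
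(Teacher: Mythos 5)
Your proposal is correct and takes essentially the same route as the paper, whose entire proof of this lemma is the single remark that it follows the proof of Lemma~\ref{lem0equations}; your hatted version of (\ref{ABequations}), the eight-fold $(k,i,j)$-decomposition, and the use of $\widetilde{V}=-V_-$ with $\widetilde{\lStot}\equiv\lStot$ are exactly what is intended. The only bookkeeping slip is that, just as in the unhatted case, only two of the three redundant components are direct reflections of the listed equations, while the third (the $(0,0,0)$-component, which reads $\lvool\lSooo+\lSooo\lvolo\equiv 0$) is instead an immediate consequence of the first equation of (\ref{firstset1}) together with its reflection.
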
 

\begin{proof} The proof follows the proof of lemma  \ref{lem0equations}.
\end{proof}

Now we show that   $(\ref{firstset1})$ is consistent  with $\lStot$, as given by equation (\ref{S1defeq}).

\subsection{Decomposition of $\hat{V}$} \label{sectnotV1}
The decomposition of $\hat{V}^*_{*,*}$ into expressions in $\mathcal{Z}$ differs slightly from the previous case in \S\ref{sectnotV}.
We encounter two new types of series
\begin{eqnarray}
\Zool  & = &-\zeta_1(1)\,  \x_0 \x_1+ \zeta_1(2,1) \,  \x_0 \x_1 \x_0 \x_1 + \ldots \ , \\
\Zoll  & = & \zeta_1(2)\,  \x_1 \x_0 \x_1- \zeta_1(2,2) \,  \x_1 \x_0 \x_1 \x_0\x_1 + \ldots \ , \nonumber\\
\lZllo  & = &  \zeta(3) \x_1 \x_1 \x_0 - \zeta(2,3) \x_1 \x_1 \x_0 \x_1 \x_0 - \zeta(3,2) \x_1 \x_0 \x_1 \x_1 \x_0 + \ldots\ , \nonumber
\end{eqnarray}
where we used the duality \S\ref{sectDuality} to relate $\Zoll$ to $\Zo$ and $\lZllo$ to $\ZH$.
A further series $\lZloo$ will only be needed in intermediate steps because it, like $\Zs$, drops out of the final calculation.

The decomposition of $\hat{V}^0=V^0$ is unchanged and given by (\ref{voeqs})
whereas the components of $\hat{V}^1$ are given by (using the fact that $\widetilde{\Zoll}=\Zoll$):
\begin{eqnarray*}
\lvloo &=&  \Zo \x_1 \widetilde{\lZloo}  - \lZloo \x_1 \Zo + \Zo\x_1\widetilde{\Zool}-\Zool\x_1\Zo\ , \\ 
\lvlol &=& -\Zo \x_1 \widetilde{\lZllo} + \lZloo \x_1 \widetilde{\Zp}-\Zo\x_1\Zoll+\Zool\x_1\widetilde{\Zp}\ ,\\
\lvllo &=& \Zp \x_1 \widetilde{\lZloo} - \lZllo \x_1 \Zo +\Zp\x_1\widetilde{\Zool}-\Zoll\x_1\Zo \ ,  \\
\lvlll &=& - \Zp \x_1\Zoll + \lZllo \x_1 \widetilde{\Zp}-\Zp\x_1\widetilde{\lZllo}+\Zoll\x_1 \widetilde{\Zp}\ .
\end{eqnarray*}

We now proceed with the verification of the equations of lemma \ref{lem1equations}.
\subsection{Alternating words}
The first equation in (\ref{firstset1}) is the same as (\ref{firstset0}). 

\subsection{Dual singular Hoffman part}
The series of singular zetas $\lZloo$ drops out of the final calculation, by the following lemma. 
\begin{lem}  The following identity holds:
\begin{equation}\label{sing1}
2\hat{V}^{\mathrm{sing}} =  \Sooo \lvlll \Sooo\ .
\end{equation}
\end{lem}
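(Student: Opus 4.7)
The plan is to imitate the proof of Lemma 3.8 (the analogous singular-Hoffman identity in the proof of Theorem 3.1), namely, to expand every $\hat{V}^*_{*,*}$ on the left-hand side using the decomposition formulas of \S4.3, and then to collapse the resulting expression using the single key identity
$$2\Zo + \Sooo\Zp \;\equiv\; 0 \;\equiv\; 2\Zo + \widetilde{\Zp}\Sooo \pmod{I_{\hat{H}}}$$
from Lemma \ref{lemZo}, together with the fact that $\lSooo = \Sooo$.

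First I would write out
$$2\hat{V}^{\mathrm{sing}} \;=\; 4\lvloo \;-\; 2\lvlol\,\Sooo \;+\; 2\Sooo\,\lvllo$$
and substitute the four-term expressions for $\lvloo,\lvlol,\lvllo$ from \S4.3. This produces twelve monomials of the form $A\x_1 B\,(\Sooo)^\epsilon$ or $\Sooo A\x_1 B$, where $A,B$ range over $\{\Zo,\Zp,\Zool,\Zoll,\lZloo,\lZllo\}$ and their reverses. Next, I would collect the terms containing the unknown singular series $\lZloo$ or $\widetilde{\lZloo}$. Their coefficients are, respectively, $-4\Zool - 2\lZloo\x_1\widetilde{\Zp}\Sooo$'s building blocks $-(2\Zo+\widetilde{\Zp}\Sooo)$ and $(2\Zo+\Sooo\Zp)$, both of which vanish by Lemma \ref{lemZo}. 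Thus the $\lZloo$-contributions drop out, exactly as the singular series $\Zs$ dropped out in the proof of Lemma \ref{lem38}.

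An identical pairing then eliminates all four of the $\Zo\x_1\widetilde{\Zool}$, $\Zool\x_1\Zo$, $\Sooo\Zp\x_1\widetilde{\Zool}$, $\Zool\x_1\widetilde{\Zp}\Sooo$ terms: the two that come from $4\lvloo$ cancel against the two coming from $-2\lvlol\Sooo + 2\Sooo\lvllo$, again by $2\Zo = -\Sooo\Zp = -\widetilde{\Zp}\Sooo$. What remains of $2\hat{V}^{\mathrm{sing}}$ is
$$2\Zo\x_1\widetilde{\lZllo}\,\Sooo \;+\; 2\Zo\x_1\Zoll\,\Sooo \;-\; 2\Sooo\,\lZllo\x_1\Zo \;-\; 2\Sooo\,\Zoll\x_1\Zo.$$

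Finally, I would expand the right-hand side $\Sooo\,\lvlll\,\Sooo$ using the formula for $\lvlll$ from \S4.3 and apply $\Sooo\Zp = -2\Zo$ on the left and $\widetilde{\Zp}\Sooo = -2\Zo$ on the right to each of its four summands. This turns $\Sooo\,\lvlll\,\Sooo$ into precisely the same four-term expression displayed above, completing the proof. The main (modest) obstacle is purely bookkeeping: keeping the twelve monomials, their signs, and the positions of $\Sooo$ straight, so that the two applications of Lemma \ref{lemZo} on each side match up; there is no new analytical or arithmetic input beyond that single identity and the definitions of \S4.3.
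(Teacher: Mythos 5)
Your proposal is correct and follows exactly the route the paper intends: its own proof of this lemma is the one-line remark that ``the calculation follows the proof of lemma \ref{lem38}'', and your expansion of $4\lvloo-2\lvlol\Sooo+2\Sooo\lvllo$ via \S4.3, the cancellation of the $\lZloo$- and $\Zool$-terms through $2\Zo=-\Sooo\Zp=-\widetilde{\Zp}\Sooo$, and the matching of the survivors with $\Sooo\lvlll\Sooo$ is precisely that calculation carried out in detail. The only blemish is the garbled sentence identifying the coefficients of $\widetilde{\lZloo}$ and $\lZloo$ (they are $2(2\Zo+\Sooo\Zp)$ and $-2\lZloo\x_1(2\Zo+\widetilde{\Zp}\Sooo)$ respectively), but the mathematics is right.
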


\begin{proof} The calculation follows the proof of lemma  \ref{lem38}. 
\end{proof}

\subsection{Dual Hoffman part}
Again the most complicated part of the calculation is the verification of the identity for $\vlll$ in (\ref{firstset1}).

\begin{lem} The following identity holds
\begin{equation}  \label{vlllid1}
2 \, \lvlll =  \voll \lSlol - \lSllo \voll \ .
\end{equation}
\end{lem}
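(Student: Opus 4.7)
The plan is to mirror the preceding proof of the $\vlll$-identity (\ref{vlllid}), with the roles of the $A^r_\bullet$ and $B^r_\bullet$ binomials exchanged. Using $\voll=\Zp\x_1\widetilde{\Zp}$, the decomposition of $\lvlll$ from \S\ref{sectnotV1}, and the facts $\widetilde{\Zoll}=\Zoll$, $\widetilde{\lSllo}=\lSlol$, the target identity $2\,\lvlll\equiv\voll\lSlol-\lSllo\voll\pmod{I_{\hat{H}}}$ is equivalent to $\hat{Y}\x_1\widetilde{\Zp}\equiv\Zp\x_1\widetilde{\hat{Y}}\pmod{I_{\hat{H}}}$, where
$$\hat{Y}=2\lZllo+2\Zoll+\lSllo\Zp\ .$$
Since the right-hand side is the letter-reversal of the left, the identity reduces to showing that $\hat{Y}\x_1\widetilde{\Zp}$ is invariant under letter reversal.

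I would then expand $\lSllo\Zp$ directly from (\ref{S1defeq}), and identify $\lZllo$ via the duality of \S\ref{sectDuality} as the generating series whose coefficient of $(\x_1\x_0)^a\x_1(\x_1\x_0)^b$ (for $a\geq 0$, $b\geq 1$) equals $(-1)^{a+b-1}\zeta(2^{\{b-1\}}3\,2^{\{a\}})$. Applying Zagier's theorem \ref{thmZagierthm} and using $B^r_a=0$ for $r\leq a$, the $B^r_a$-contributions of the two series cancel precisely, leaving
$$2\lZllo+\lSllo\Zp=-4\sum_{a\geq 0,\,b\geq 1}\sum_{r=b}^{a+b}(-1)^{a+b-r}A^r_{b-1}\zeta(2r+1)\zeta(2^{\{a+b-r\}})\,(\x_1\x_0)^a\x_1(\x_1\x_0)^b\ .$$
This is the mirror of the cancellation in the proof of (\ref{vlllid}), where the $A^r_\bullet$-part vanished instead. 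Right-multiplying by $\x_1\widetilde{\Zp}$ and substituting $(\alpha,\beta)=(a,b+s)$, $\gamma=r-b$, $\delta=b-1$, the coefficient of $(\x_1\x_0)^\alpha\x_1\x_1(\x_0\x_1)^\beta$ in $(2\lZllo+\lSllo\Zp)\x_1\widetilde{\Zp}$ becomes
$$-4\sum_{\gamma=0}^\alpha\sum_{\delta=0}^{\beta-1}(-1)^{\alpha+\beta-\gamma-\delta-1}A^{\gamma+\delta+1}_\delta\zeta(2\gamma+2\delta+3)\zeta(2^{\{\alpha-\gamma\}})\zeta(2^{\{\beta-\delta-1\}})\ .$$

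Finally, the contribution $2\Zoll\x_1\widetilde{\Zp}$ supplies the coefficient $2(-1)^{\alpha+\beta+1}\zeta_1(2^{\{\alpha\}})\zeta(2^{\{\beta\}})$ on the same word; expanding $\zeta_1(2^{\{\alpha\}})$ via (\ref{zeta12s}) identifies this as exactly the $\delta=-1$ boundary of the above double sum (the would-be $(\gamma,\delta)=(0,-1)$ term involving $\zeta(1)$ is harmlessly absent because (\ref{zeta12s}) starts at $i=1$). Thus the total coefficient in $\hat{Y}\x_1\widetilde{\Zp}$ is indexed by $(\gamma,\delta)\in[0,\alpha]\times[-1,\beta-1]\setminus\{(0,-1)\}$. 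To verify $\alpha\leftrightarrow\beta$ symmetry, I would swap $\alpha\leftrightarrow\beta$, then swap the dummy variables $\gamma\leftrightarrow\delta$, and finally substitute $(\gamma,\delta)\mapsto(\gamma-1,\delta+1)$; every factor is preserved except that $A^{\gamma+\delta+1}_\delta$ is replaced by $A^{\gamma+\delta+1}_{\gamma-1}$, and these are equal by the elementary identity $\binom{N}{k}=\binom{N}{N-k}$ applied to $N=2\gamma+2\delta+2$. The main obstacle is the appearance of the singular series $\Zoll$, with its ill-understood $\zeta_1$-values, in the decomposition of $\lvlll$; the saving grace is that (\ref{zeta12s}) re-expresses $\zeta_1(2^{\{n\}})$ explicitly in terms of odd zetas and even powers of $\pi$, so that $\Zoll$ furnishes exactly the boundary term needed to extend Zagier's summation range to a fully symmetric form. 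This is the structural reason for the $(1-2^{-2r})$-factors in the definition of $\lSllo$.
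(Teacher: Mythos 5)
Your proof is correct and follows essentially the same route as the paper's: the same auxiliary series $\hat{Y}=2\lZllo+2\Zoll+\lSllo\Zp$, the same cancellation of the $B^r_\bullet$-binomials between $2\lZllo$ (read via duality and Zagier's formula) and $\lSllo\Zp$, the same absorption of $2\Zoll$ as the $b=0$ (your $\delta=-1$) boundary term, and the same reduction to letter-reversal invariance after multiplying by $\x_1\widetilde{\Zp}$. The only difference is cosmetic: your change of variables is shifted by one relative to the paper's, and you spell out the final symmetry check via $\binom{N}{k}=\binom{N}{N-k}$, which the paper leaves as evident.
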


\begin{proof}
With the definition
$$
\hat{Y}=2\lZllo+2\Zoll+\lSllo \Zp
$$
we can rewrite equation\ (\ref{vlllid1}) as
$$
\hat{Y}\x_1\widetilde{\Zp}=\Zp\x_1\widetilde{\hat{Y}}\ .
$$
With definition\ \ref{S1def} and notation $(\ref{notationAB})$,  we have
\begin{eqnarray*}
\lSllo\Zp&=&-4\sum_{m,k\geq 0; n \geq 1 } B^{m+n}_m\zeta(2m+2n+1)(-1)^k\zeta(2^{\{k\}})\,(\x_1 \x_0)^{m}  \x_1   (\x_1 \x_0)^{n+k}\\
&=&-4\sum_{a\geq0,b\geq1}\sum_{r=a}^{a+b}(-1)^{a+b-r}B^r_a\zeta(2r+1)\zeta(2^{\{a+b-r\}})\,(\x_1\x_0)^a  \x_1   (\x_1 \x_0)^b\ ,
\end{eqnarray*}
where we have set $\zeta(1)=0$.
Equation $(\ref{ZagierFormula})$  together with the duality transformation \S\ref{sectDuality}
implies that in $2\lZllo+\lSllo\Zp$ one binomial cancels
$$
2\lZllo+\lSllo \Zp=-4\sum_{a\geq0,b\geq1}\sum_{r=b}^{a+b}(-1)^{a+b-r}A^r_{b-1}\zeta(2r+1)\zeta(2^{\{a+b-r\}})\,(\x_1\x_0)^a  \x_1   (\x_1 \x_0)^b\ .
$$
The contribution of $2\Zoll$, after applying the duality transformation $(\ref{MZVduality})$, is given by  $(\ref{zeta12s})$
$$
2\Zoll=-4\sum_{a\geq0}\sum_{r=1}^a(-1)^{a-r}\zeta(2r+1)\zeta(2^{\{a-r\}})(\x_1\x_0)^a\x_1\ .
$$
This equals the $b=0$ term in the above sum.
Multiplication by $\x_1\widetilde{\Zp}$ yields for $-\hat{Y}\x_1\widetilde{\Zp}/4$ the expression
\begin{eqnarray*}
&&\hspace{-28pt}\sum_{a,b\geq0}\sum_{r=b}^{a+b}\sum_{s=0}^\infty(-1)^{a+b-r+s}A^r_{b-1}\zeta(2r+1)
\zeta(2^{\{a+b-r\}})\zeta(2^{\{s\}})\,(\x_1 \x_0)^a  \x_1   (\x_1 \x_0)^b\x_1   (\x_0\x_1)^s\\ 
=&&\hspace{-22.4pt}\sum_{\alpha,\beta\geq0}\sum_{\gamma=0}^{\alpha}\sum_{\delta=0}^{\beta}(-1)^{\alpha+\beta-\gamma-\delta}
A^{\gamma+\delta}_{\delta-1}\zeta(2\gamma+2\delta+1)\zeta(2^{\{\alpha-\gamma\}})\zeta(2^{\{\beta-\delta\}})\,\x_1(\x_0 \x_1)^{\alpha}(\x_1 \x_0)^{\beta} \x_1.
\end{eqnarray*}
where the change of summation variables is given by $(a,b,r,s)= (\alpha, \delta, \gamma+\delta, \beta- \delta)$.
The last expression is  invariant under letter reversal which completes the proof.
\end{proof}
 
\subsection{Monodromy at zero}
As previously, we need the following lemma.

\begin{lem}\label{lemxo1} $[\lStot,\x_0]=0$.
\end{lem}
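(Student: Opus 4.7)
The plan is to follow closely the pattern of lemma \ref{lemxo}. The first step is to use the shape of $\lStot = 1 + \lSooo + \lSllo + \lSlol + \lSloo$ from definition \ref{S1def} to show that working modulo $I_{\hat{H}}$ collapses the commutator to a single piece. Indeed, both $\lSooo$ and $\lSloo$ begin and end in $\x_0$, so $[\lSooo,\x_0]$ and $[\lSloo,\x_0]$ always produce an $\x_0\x_0$ boundary substring and thus lie in $I_{\hat{H}}$; since $\lSllo$ ends in $\x_0$, we have $\lSllo\,\x_0 \in I_{\hat{H}}$; since $\lSlol$ starts in $\x_0$, we have $\x_0\,\lSlol \in I_{\hat{H}}$. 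Hence
\[
[\lStot, \x_0]\ \equiv\ \lSlol\,\x_0\ -\ \x_0\,\lSllo \pmod{I_{\hat{H}}}\ .
\]

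The two remaining series are both supported on words in $\hat{H}$ of the form $\x_0 v \x_0$ that contain exactly one $\x_1\x_1$ subsequence. Expanding using $(\ref{S1defeq})$ and the reversal $\lSlol = \widetilde{\lSllo}$:
\[
\x_0\,\lSllo = \sum_{\substack{m\geq 0\\ n\geq 1}} \lSllo(m,n)\,\x_0(\x_1\x_0)^m\x_1(\x_1\x_0)^n, \quad
\lSlol\,\x_0 = \sum_{\substack{m\geq 1\\ n\geq 0}} \lSllo(n,m)\,(\x_0\x_1)^m\x_1(\x_0\x_1)^n\x_0,
\]
where $\lSllo(m,n) = -4\bigl(1-2^{-2m-2n}\bigr)\binom{2m+2n}{2m+1}\zeta(2m+2n+1)$. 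A short length-and-position count shows that the word $\x_0(\x_1\x_0)^m\x_1(\x_1\x_0)^n$ appearing in the first sum coincides with the word $(\x_0\x_1)^{m+1}\x_1(\x_0\x_1)^{n-1}\x_0$ appearing in the second, and this is the only way each such word arises on either side.

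Projecting onto this basis therefore reduces the equality $\lSlol\,\x_0 \equiv \x_0\,\lSllo \pmod{I_{\hat{H}}}$ to the numerical identity $\lSllo(m,n) = \lSllo(n-1, m+1)$ for all $m \geq 0$ and $n \geq 1$. The common prefactor $-4(1-2^{-2m-2n})\zeta(2m+2n+1)$ cancels, so the identity boils down to the binomial symmetry $\binom{2m+2n}{2m+1} = \binom{2m+2n}{2n-1}$, which holds because $(2m+1)+(2n-1)=2m+2n$. The only mildly subtle point is spotting the correct reindexing $(m,n)\leftrightarrow(n-1,m+1)$ between the two parameterizations; the remainder is direct bookkeeping, exactly as in lemma \ref{lemxo}.
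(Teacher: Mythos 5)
Your proof is correct and follows essentially the same route as the paper: reduce $[\lStot,\x_0]$ modulo $I_{\hat{H}}$ to the single identity $\lSlol\x_0\equiv\x_0\lSllo$, then match coefficients word by word, which under your reindexing $(m,n)\leftrightarrow(n-1,m+1)$ is exactly the paper's condition $\lSllo(a-1,b)=\lSllo(b-1,a)$ and reduces to the binomial symmetry $\binom{2m+2n}{2m+1}=\binom{2m+2n}{2n-1}$. The only cosmetic difference is that you spell out why the $\lSooo$ and $\lSloo$ terms die modulo $I_{\hat{H}}$, which the paper leaves implicit.
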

\begin{proof}
By the general shape of $\lStot$ we find that $[\lStot,\x_0]\equiv0$ mod $I_{\hat{H}}$ 
is equivalent to
\begin{equation}\label{xoexp1}
\lSlol\x_0=\x_0\lSllo\ .
\end{equation}
According to equation\ (\ref{S1defeq}) we decompose
\begin{eqnarray}\label{lSmn}
\lSllo&=&\sum_{m\geq0,n\geq1}\lSllo(m,n)\,(\x_1\x_0)^m\x_1(\x_1\x_0)^n\ ,\quad\hbox{and}\\
\lSlol&=&\sum_{m\geq1,n\geq0}\lSllo(n,m)\,(\x_0\x_1)^m\x_1(\x_0\x_1)^n\ .\nonumber
\end{eqnarray}
Projecting (\ref{xoexp1}) onto words of the form
$(\x_0\x_1)^a(\x_1\x_0)^b$ for $a,b>0$ gives the single condition  $\lSllo(a-1,b)=\lSllo(b-1,a)$ which
can be verified in equation\ (\ref{S1defeq}).
\end{proof}

\subsection{Proof of single-valuedness} 
To prove property $(ii)$ of proposition \ref{prop1} we need to show that the last two equations in equation\ (\ref{firstset1}) hold.
The proofs are straightforward applications of $(\ref{Vfourtoone})$, (\ref{sing1}), and   $(\ref{vlllid1})$  to write all $\hat{V}$'s in terms of 
$\voll$, and reduce to  the definition of $\lSloo$ in (\ref{S1defeq}).
Property $(i)$ is lemma \ref{lemxo1}, and property $(iii)$ is obvious from the definition of
$\lStot$. 
This completes the proof of  theorem \ref{theoremSV2}.

\section{Proof of the zig-zag conjecture} 

We are now ready to prove the zig-zag theorem \ref{mainthm}.

\begin{defn}
With the notation of equations (\ref{Soldef}) and  (\ref{Soldef1}) and from \cite{Graphical} we define for alternating words
$$
w=\ldots\x_0\x_1\x_0\x_1\ldots
$$
and
$$
v=\widetilde{w}\x_0\x_1w
$$
the functions $f_{2w}$ by
\begin{equation}\label{fdef}
f_{2w}(z)=(-1)^{|w|}\left\{\begin{array}{cl}\displaystyle
F_v(z)-F_{\widetilde{v}}(z) \over \displaystyle z-\overline{z}&\hbox{if $w=\x_0u$,}\\
\raisebox{3ex}{}\displaystyle\hat{F}_v(z)-\hat{F}_{\widetilde{v}}(z) \over \displaystyle z-\overline{z}&\hbox{if $w=\x_1u$.}\end{array}\right.
\end{equation}
\end{defn}

Recall that the Bloch-Wigner dilogarithm (see e.g.\ \cite{Zagierdilog})  is the single-valued
version of the dilogarithm $\Li_2(z)$ $(\ref{classicalpolyasL})$ defined by:
\begin{equation}\label{BWdilog}
D(z)=\mathrm{Im}(\Li_2(z)+\log |z| \log(1-z))\ .
\end{equation}

\begin{prop}\label{propgraph}
The functions $f_{2w}$ are real-valued, symmetric,
$$
f_{2w}(z)=f_{2w}(\overline{z})\ ,
$$
single-valued solutions to the system of differential equations
\begin{equation}\label{diffeq}
-{1\over z-\overline{z}}{\partial^2\over\partial z \partial\overline{z}}(z-\overline{z})f_{2w\x_a}(z)
={1\over (z-a)(\overline{z}-a)}f_{2w}(z)
\end{equation}
for $a\in\{0,1\}$ with the initial condition
\begin{equation}\label{f0}
f_2(z)={4iD(z)\over z-\overline{z}}\ .
\end{equation}
\end{prop}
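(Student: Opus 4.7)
\emph{Proof plan.} The plan is to verify each of the five claims by direct manipulation of the series constructed in \S\ref{SV1} and \S4. The essential observation is that if $w$ is alternating and starts with $\x_0$, then the auxiliary word $v=\widetilde w\x_0\x_1 w$ contains exactly one $\x_0\x_0$, no $\x_1\x_1$ and no $\x_0\x_0\x_0$, so that $v,\widetilde v\in\B^0$; dually, if $w$ starts with $\x_1$ then $v,\widetilde v\in\B^1$. Moreover, whenever $w\x_a$ is alternating, $v'=\widetilde{w\x_a}\x_0\x_1 w\x_a=\x_a v\x_a$ lies in the same set. Hence Theorems~\ref{theoremSV1} and~\ref{theoremSV2} apply directly, with $F_v$ interpreted as (\ref{Soldef}) or (\ref{Soldef1}) according to the case; the two cases run in parallel.

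Single-valuedness of $f_{2w}$ follows because $F_v(z)-F_{\widetilde v}(z)$ is single-valued on $\Pro^1\setminus\{0,1,\infty\}$, vanishes on the real axis by the identity $F_v(z)=F_{\widetilde v}(\overline z)$, and is real-analytic in $(z,\overline z)$, so division by $z-\overline z$ yields a smooth single-valued quotient. Real-valuedness and symmetry follow from the same identity together with the reality of the coefficients of $\Stot,\lStot$: the latter implies $\overline{F_v(z)}=F_v(\overline z)$, and hence
\[
\overline{F_v(z)-F_{\widetilde v}(z)}=F_{\widetilde v}(z)-F_v(z).
\]
The numerator is therefore purely imaginary, and dividing by the purely imaginary $z-\overline z$ gives a real function; the symmetry $f_{2w}(\overline z)=f_{2w}(z)$ is then immediate. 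The differential equation (\ref{diffeq}) follows by writing $v'=\x_a v\x_a$, $\widetilde{v'}=\x_a\widetilde v\x_a$ and applying (\ref{SV1ODE})--(\ref{SV2ODE}) to obtain
\[
\frac{\partial^2}{\partial z\,\partial\overline z}\bigl(F_{v'}(z)-F_{\widetilde{v'}}(z)\bigr)=\frac{F_v(z)-F_{\widetilde v}(z)}{(\overline z-a)(z-a)},
\]
after which multiplying (\ref{fdef}) through by $z-\overline z$ and using $(-1)^{|w\x_a|}=-(-1)^{|w|}$ to absorb the explicit minus sign produces (\ref{diffeq}) exactly.

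For the base case $w=\emptyset$ one has $v=\x_0\x_1\in\B^0\cap\B^1$. The expansion (\ref{Soldef}) collapses, because $\Stot_{\x_0}=\Stot_{\x_1}=\Stot_{\x_0\x_1}=\Stot_{\x_1\x_0}=0$ by definition~\ref{S0def}, to
\[
F_{\x_0\x_1}(z)=L_{\x_0\x_1}(z)+L_{\x_1\x_0}(\overline z)+\log(\overline z)\log(1-z),
\]
with an analogous formula for $F_{\x_1\x_0}$. Using $L_{\x_1\x_0}=-\Li_2$ together with the shuffle identity $L_{\x_0}L_{\x_1}=L_{\x_0\x_1}+L_{\x_1\x_0}$, the difference simplifies to $4i\bigl(\mathrm{Im}\,\Li_2(z)+\log|z|\,\mathrm{Im}\log(1-z)\bigr)=4iD(z)$ by (\ref{BWdilog}), and division by $z-\overline z$ yields (\ref{f0}). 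The whole argument is essentially bookkeeping; the only mildly delicate point is the smooth divisibility by $z-\overline z$, which is immediate from the diagonal vanishing of the real-analytic numerator.
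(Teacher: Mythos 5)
Your argument is correct and follows the same route as the paper: everything except the initial condition is read off from Theorems \ref{theoremSV1} and \ref{theoremSV2} (you merely spell out the divisibility by $z-\overline{z}$, the reality/symmetry bookkeeping, and the identification $v'=\x_a v\x_a$ that the paper leaves implicit as ``obvious''), and the base case is the same shuffle-product reduction of $F_{\x_0\x_1}-F_{\x_1\x_0}$ to $4iD(z)$ via $(\ref{Lshuff})$ and $(\ref{classicalpolyasL})$ that the paper uses. No gaps.
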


\begin{proof}
From theorems \ref{theoremSV1} and \ref{theoremSV2}, all statements  except the last one are obvious.
To derive equation (\ref{f0}) we first observe that
$$(z-\overline{z})f_0(z)=L_{\x_1\x_0}(\overline{z})+L_{\x_0}(\overline{z})L_{\x_1}(z)+L_{\x_0\x_1}(z)-
L_{\x_1\x_0}(z)-L_{\x_0}(z)L_{\x_1}(\overline{z})-L_{\x_0\x_1}(\overline{z})\ .$$
Then one can use the shuffle product  $(\ref{Lshuff})$ to convert this expression into the dilogarithm and logarithms  via $(\ref{classicalpolyasL})$ yielding (\ref{f0}).
\end{proof}

From the theory of graphical functions \cite{Graphical}, corollary 3.28 and equation (1.9) (see also \cite{Drummond}),
which in turn uses the existence of the single-valued multiple polylogarithms \cite{BrSVP}, we have the following general theorem:

\begin{thm}\label{thegraphicalthm}
The system of differential equations (\ref{diffeq}) with initial condition (\ref{f0}) admits a unique symmetric solution $f_{2w}^{\mathrm{SVMP}}$
of the form single-valued multiple polylogarithm in $z$ divided by $z-\overline{z}$. The periods of the zig-zag graphs (\ref{IZ}) are
\begin{equation}
I_{Z_n}=f_{2w}^{\mathrm{SVMP}}(0)
\end{equation}
if $w$ is the alternating word in $\x_0$ and $\x_1$ of length $n-2$ which ends in $\x_1$.
\end{thm}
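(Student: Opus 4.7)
The plan is to recognize Theorem \ref{thegraphicalthm} as a specialization of the general theory of graphical functions \cite{Graphical} to the zig-zag family, and to verify its three ingredients: the recursive structure of the ODE, the base case, and the period evaluation at $z=0$.

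First, I would exploit the fact that $Z_n$ is built inductively from $Z_{n-1}$ by attaching a triangle at the external vertex $0$ or $1$, alternating at each step. In position space, with three external vertices fixed at $0$, $1$, $z\in\R^4$, the addition of such a triangle transforms the graphical function $f_G$ by the integration operator whose inverse is the left-hand side of $(\ref{diffeq})$; the choice of attachment vertex determines whether the denominator on the right is $z\bar z$ or $(z-1)(\bar z-1)$, which corresponds exactly to the last letter $\x_a$ of the word $w\x_a$. The alternating pattern of $w$ then encodes the zig-zag shape, and the length $n-2$ accounts for the $n-2$ triangles built on top of the base piece. The base case $w=\emptyset$ is the graphical function of the fundamental triangular subgraph, i.e.\ the four-dimensional massless one-loop integral with three external legs, whose standard evaluation yields the Bloch--Wigner dilogarithm and reproduces $(\ref{f0})$.

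Next, the existence and uniqueness of $f^{\mathrm{SVMP}}_{2w}$ follow from the theory of single-valued multiple polylogarithms developed in \cite{BrSVP}: any unipotent differential system on $\Pro^1\backslash\{0,1,\infty\}$ whose right-hand side lies in the ring of SVMPs admits a unique single-valued solution of the same type, once the symmetry under $z\leftrightarrow\bar z$ and the regularity at $\{0,1,\infty\}$ are imposed. Applied inductively along the letters of $w$, starting from the SVMP $(\ref{f0})$, this construction produces $f^{\mathrm{SVMP}}_{2w}$ step by step. This is exactly the content of Corollary 3.28 of \cite{Graphical}.

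Finally, the identity $I_{Z_n} = f^{\mathrm{SVMP}}_{2w}(0)$ is obtained by sending $z\to 0$, which collapses the moving external vertex onto the fixed vertex $0$ and closes up the graph into the full $Z_n$ with no external momenta; the resulting value is then precisely the parametric integral $(\ref{IZdef})$ defining the period. The main obstacle I expect is controlling the $z\to 0$ limit in spite of the explicit factor $1/(z-\bar z)$: the numerator must vanish on the diagonal $z=\bar z$ at the correct order. This regularity is a general property of graphical functions at external vertices, proved in \cite{Graphical}, and on the zig-zag side it is consistent with the convergence of the parametric integral $(\ref{IZdef})$. Once these ingredients are assembled, the theorem follows as a direct combination of the SVMP existence result with the position-space interpretation of the zig-zag amplitude.
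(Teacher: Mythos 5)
The paper does not actually prove this theorem: it imports it wholesale from the theory of graphical functions (\cite{Graphical}, Corollary 3.28 and equation (1.9), see also \cite{Drummond}), and your sketch is a faithful unpacking of precisely the argument contained in that reference --- the inductive triangle-appending that produces the ODE with denominator $(z-a)(\overline{z}-a)$, the one-loop triangle / Bloch--Wigner base case, existence and uniqueness via single-valued multiple polylogarithms, and the period as the regular value at $z=0$. Your approach therefore coincides with the paper's (which consists of citing this result), and no gap needs to be flagged.
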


From proposition \ref{propgraph} and uniqueness we know that
$$f_{2w}^{\mathrm{SVMP}}=f_{2w}$$
given by (\ref{fdef}). To prove the zig-zag conjecture we have to determine the (regular) value of $f_{2w}$ at $z=0$. If $g$ is a single-valued function  which vanishes at $z=0$,
then setting, for example, $z= i \varepsilon $ and applying L'H{\^o}pital's rule to compute the limit as $\varepsilon \rightarrow 0$, gives
$$
\lim_{z\rightarrow 0} {g(z)\over z-\overline{z}} = {1 \over 2} \Big( {\partial g\over \partial z}(0) -{\partial g\over \partial\overline{ z}}(0)\Big)\ .
$$
Applying this formula to  (\ref{fdef}) with  the word $v=\x_1u\x_1$, where $|u|=2n-4$, we obtain
$$
f_{2w}(0)=\left\{\begin{array}{cl}
-F_{\x_1u}(0)+F_{u\x_1}(0)&\hbox{if $n$ is even,}\\
\hat{F}_{\x_1u}(0)-\hat{F}_{u\x_1}(0)&\hbox{if $n$ is odd,}\end{array}\right.
$$
where we have used  theorems \ref{theoremSV1} and \ref{theoremSV2}, and the fact that $F_{\x_1\widetilde{u}}$ and $F_{\widetilde{u}\x_1}$ are complex conjugates
of $F_{u\x_1}$ and $F_{\x_1u}$, respectively.
By theorem \ref{thegraphicalthm} the value $f_{2w}(0)$ is well-defined. Hence we  may use the regularized value (setting $\log(0)=0$) of the multiple polylogarithms at zero
to evaluate this expression. Because the regularized value at zero of any non-constant multiple polylogarithm vanishes, (\ref{Soldef}) or (\ref{Soldef1}) give
$$
I_{Z_n}=\left\{\begin{array}{cl}
-\Stot_{\x_1u}+\Stot_{u\x_1}&\hbox{if $n$ is even,}\\
\lStot_{\x_1u}-\lStot_{u\x_1}&\hbox{if $n$ is odd.}\end{array}\right.
$$
Now, in the case of even $n$ we find that $\Stot_{\x_1u}$ and $\Stot_{u\x_1}$ are summands in $\Sllo$ and $\Slol$, respectively. With notation (\ref{Smn}) we have
$$
I_{Z_n}=-\Sllo\left({n-2\over 2},{n-2\over 2}\right)+\Sllo\left({n\over 2},{n-4\over 2}\right)
$$
which evaluates by (\ref{Sdef}) to
$$
-4\left[-\binom{2n-4}{n-2}+\binom{2n-4}{n}\right]\zeta(2n-3)=4{(2n-2)! \over n! (n-1)!}\zeta(2n-3)
$$
as in theorem \ref{mainthm}.

If $n$ is odd then $\lStot_{\x_1u}$ and $\lStot_{u\x_1}$ are summands in $\lSllo$ and $\lSlol$, respectively. With notation (\ref{lSmn}) we get
$$
I_{Z_n}=\lSllo\left({n-1\over 2},{n-3\over 2}\right)-\lSllo\left({n-3\over 2},{n-1\over 2}\right)
$$
which evaluates to
$$
4(1-2^{-2n+4}){(2n-2)! \over n! (n-1)!}\zeta(2n-3)
$$
by exactly the same calculation. This completes the proof of the zig-zag theorem.

\bibliographystyle{plain}
\bibliography{main}

\end{document}